\setlist[enumerate]{topsep=0pt,label=\textup{(\arabic*)},leftmargin=\parindent,labelsep=.5em}
\setlist{noitemsep}
\tikzset{
    labl/.style={anchor=south, rotate=90, inner sep=.5mm}
}
\tikzset{
  symbol/.style={
    draw=none,
    every to/.append style={
      edge node={node [sloped, allow upside down, auto=false]{$#1$}}}
  }
}
\theoremstyle{plain}
\declaretheorem[numberlike=subsection]{proposition}
\declaretheorem[numberlike=subsection]{theorem}
\declaretheorem[numberlike=subsection]{lemma}
\declaretheorem[numbered=no,name=Theorem]{IntroThm}
\theoremstyle{definition}
\declaretheorem[numberlike=subsection]{example}
\declaretheorem[numberlike=subsection]{remark}
\declaretheorem[numbered=no,name=Acknowledgement]{acknowl}
\numberwithin{equation}{subsection}
\titleformat{\section}[block]
  {\filcenter\normalfont\large\bfseries}{\thesection.}{.5em}{}
\titleformat{\subsection}[runin]
  {\normalfont\bfseries}{\thesubsection}{.5em}{}
\titlespacing*{\section}{0pt}{6ex plus 1ex minus .2ex}{3ex plus .2ex}
\titlespacing*{\subsection}{0pt}{\topsep}{.5em}
\begin{document}

\begin{center}
\textbf{\Large COMPUTING DISCRETE INVARIANTS OF}
\vspace{2mm}

\textbf{\Large VARIETIES IN POSITIVE CHARACTERISTIC}
\vspace{4mm}

{\Large I.~\textit{Ekedahl-Oort types of curves}}
\vspace{8mm}

\textit{by}
\bigskip

{\Large Ben Moonen}
\end{center}
\vspace{6mm}

{\small %% for abstract and MSC

\noindent
\begin{quoting}
\textbf{Abstract.} We develop a method to compute the Ekedahl--Oort type of a curve~$C$ over a field~$k$ of characteristic~$p$ (which is the isomorphism type of the $p$-kernel group scheme~$J[p]$, where $J$ is the Jacobian of~$C$). Part of our method is general, in that we introduce the new notion of a Hasse--Witt triple, which re-encodes in a useful way the information contained in the Dieudonn\'e module of~$J[p]$. For complete intersection curves we then give a simple method to compute this Hasse--Witt triple. An implementation of this method is available in Magma.
\medskip

\noindent
\textit{AMS 2010 Mathematics Subject Classification:\/} 11G20, 14Q05, 14L15
\medskip

\noindent
\textit{Key words:\/} $p$-kernel group schemes, Jacobians, Dieudonn\'e modules, Ekedahl--Oort types
\end{quoting}

} %% end of small
\vspace{6mm}

\section{Introduction}

An elliptic curve $E$ over a field~$k$ of characteristic~$p$ can be either ordinary or supersingular. If $E$ is given by a homogeneous cubic equation $f=0$ then $E$ is supersingular if and only if the coefficient of $(X_0 X_1 X_2)^{p-1}$ in~$f^{p-1}$ is non-zero. The goal of the present paper (and its sequel~\cite{CDI2}) is to generalize this, first to more general curves, and later to certain varieties of higher dimension.

For the rest of this introduction, assume the ground field~$k$ is algebraically closed; let $\sigma$ be its Frobenius automorphism. For curves~$C$ of genus~$g$ there are $2^g$ possibilities for the isomorphism class of the group scheme~$J[p]$, the $p$-kernel of the Jacobian of~$C$. This isomorphism class is often referred to as the Ekedahl--Oort type of~$C$. The Ekedahl--Oort stratification on the moduli space~$\cA_g$ of $g$-dimensional abelian varieties (say with principal polarization) has been studied in great detail, and the underlying theory has been refined so as to be able to handle more general Shimura varieties. It appears, however, that not so many results concerning explicit calculations are as yet available. The main question that we answer in this paper is how, for a complete intersection curve $C \subset \mP^n$ given by homogeneous equations $f_1 = \cdots = f_{n-1} = 0$ and $p$ not dividing the degrees of these equations, we can calculate the Ekedahl--Oort type of~$C$. (For hyperelliptic curves, which are more amenable to explicit calculation, this question was answered in~\cite{DevaHalli}.)

At the heart of the paper lies a new approach to such questions, which in the second part of this work will be extended to varieties of higher dimensions, or better: to cohomology in degree~$>1$. The Ekedahl--Oort type of a curve~$C$ can be calculated from the Dieudonn\'e module of~$J[p]$, which is just the first de Rham cohomology group $H^1_\dR(C/k)$, equipped with semi-linear operations $F$ and~$V$. Thinking of $H^1_\dR$ as being an extension of $Q = H^1(C,\cO_C)$ by its dual $Q^\vee \cong H^0(C,\Omega^1_C)$, we realize that the induced action of~$F$ on~$Q$ (which is just the classical Hasse--Witt operator) is relatively easy to calculate. Unlike the case of elliptic curves, however, for $g\geq 2$ knowing the Hasse--Witt operator $\Phi \colon Q \to Q$ is in general not enough to determine the Ekedahl--Oort type of~$C$. The idea that we pursue is that there is not too much information missing. We make this precise in the notion of a Hasse--Witt triple.

By definition, a Hasse--Witt triple $(Q,\Phi,\Psi)$ consists of a finite dimensional $k$-vector space~$Q$ equipped with a $\sigma$-linear endomorphism $\Phi \colon Q \to Q$  and a $\sigma$-linear bijective map $\Psi\colon \Ker(\Phi) \to \Coker(\Phi)^\vee$. To a Dieudonn\'e module of a $p$-kernel group scheme~$G$ (more precisely: a polarized $p$-kernel of a $p$-divisible group) one can associate a Hasse--Witt triple. There is also an easy way to go back from a HW-triple to a Dieudonn\'e module. In Theorem~\ref{thm:DM=HW} we show that this gives a bijective correspondence between isomorphism classes of Dieudonn\'e modules and isomorphism classes of HW-triples.

The main point of the paper, then, is that the Hasse--Witt triple associated to a curve can be efficiently computed, at least for complete intersection curves if $p$ does not divide the degrees of the equations. (We expect that something similar can be done in case the curve~$C$ is given to us in a different way, e.g., as a branched cover of~$\mP^1$.) Let us describe the main result in the case of a plane curve $C \subset \mP^2$. The general case, which is given in Theorem~\ref{thm:HWCI}, is entirely similar but is notationally a little more involved.

\begin{IntroThm}
Let $C = \cZ(f) \subset \mP^2$ be a plane curve of degree~$d \geq 3$ over an algebraically closed field~$k$ of characteristic~$p$ with $p\nmid d$. Let $\sS = k[X_0,X_1,X_2]$, and define $\sT = k[X_0^{\pm 1},X_1^{\pm 1},X_2^{\pm 1}]/L$, where $L$ is the $k$-linear span of all monomials $X_0^{e_0} X_1^{e_1} X_2^{e_2}$ for which at least one of the exponents~$e_i$ is non-negative. Let $\sS = \oplus_{m\geq 0}\, \sS_m$ and $\sT = \oplus_{m\leq -3}\, \sT_m$ be the natural gradings. Define
\[
Q = \sT_{-d}\, ,\quad \text{and}\quad Q^\prime = \bigl\{\xi \in \sT_{-2d} \bigm| \tfrac{\partial f}{\partial X_j} \cdot \xi = 0\ \text{in $\sT_{-d-1}$, for all $j=0,1,2$}\bigr\}\, .
\]
The space $\sU = \bigl\{\xi \in \sT_{-3d+3} \bigm| \tfrac{\partial f}{\partial X_j} \cdot \xi = 0\quad \text{for all $j=0,1,2$}\bigr\}$ is $1$-dimensional. Choose $0 \neq \sansu \in \sU$. Then $\sS_{d-3} \isomarrow Q^\prime$ by $g \mapsto g\cdot \sansu$, and the bilinear map $Q \times Q^\prime \to k$ that sends $(q,g\cdot \sansu)$ to the coefficient of~$X_0^{-1} X_1^{-1} X_2^{-1}$ in $g \cdot q$ is a perfect pairing. Let $\theta \colon Q^\prime \isomarrow Q^\vee$ be the associated isomorphism.

Define
\[
\Phi \colon Q \to Q\qquad \text{by}\quad \Phi[A] = \bigl[f^{p-1} \cdot A^p\bigr]
\]
and define
\[
\Psi \colon \Ker(\Phi) \to Q^\vee\qquad \text{by}\quad \Psi[A] = \theta \bigl[f^{p-2} \cdot A^p\bigr]\, .
\]
Then $(Q,\Phi,\Psi)$ is a Hasse--Witt triple whose associated Dieudonn\'e module is isomorphic to the Dieudonn\'e module of the group scheme~$J[p]$, where $J$ is the Jacobian of~$C$.
\end{IntroThm}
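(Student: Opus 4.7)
The plan is to identify each of the data $Q$, $Q^\vee$, $Q'$, $\theta$, $\Phi$ and $\Psi$ with a concrete cohomological invariant of $C$, and then to recognise the resulting triple as the HW-triple associated to $H^1_\dR(C/k) = \mathbb{D}(J[p])$ via Theorem~\ref{thm:DM=HW}. From the ideal sheaf sequence $0\to\cO_{\mP^2}(-d)\xrightarrow{f}\cO_{\mP^2}\to\cO_C\to 0$ and the vanishing of $H^i(\mP^2,\cO_{\mP^2})$ for $i\in\{1,2\}$, the connecting map is an isomorphism $H^1(C,\cO_C)\isomarrow H^2(\mP^2,\cO(-d))$; a \v{C}ech computation on the standard cover $\{U_i=\{X_i\neq 0\}\}$ realises the target as $\sT_{-d}$, so $Q=H^1(C,\cO_C)$. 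Adjunction gives $\omega_C=\cO_C(d-3)$, and twisting the ideal sheaf sequence (using $d\geq 3$) yields $H^0(C,\Omega^1_C)=\sS_{d-3}$. A direct \v{C}ech calculation then identifies Serre duality on $C$ with the classical residue pairing: for $A\in\sT_{-d}$ and $g\in\sS_{d-3}$, $\langle A,g\rangle$ is the coefficient of $X_0^{-1}X_1^{-1}X_2^{-1}$ in the degree-$(-3)$ Laurent polynomial $gA$.

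For $Q'$ and $\theta$ I would use Jacobian-ring duality. Smoothness of $C$ together with $p\nmid d$ implies, via Euler's relation, that the partials $\partial f/\partial X_j$ have no common zero in $\mP^2$ and hence form a regular sequence in $\sS$; thus the Jacobian ring $R=\sS/J(f)$ is a graded Artinian Gorenstein algebra with one-dimensional socle in degree $3(d-2)$. Under the residue pairing above, multiplication by $\partial f/\partial X_j$ on $\sT$ is adjoint to multiplication by $\partial f/\partial X_j$ on $\sS$, so the annihilator $\sT^{J(f)}_m\subset\sT_m$ is canonically $R_{-m-3}^\vee$. In particular $\sU=\sT^{J(f)}_{-3d+3}$ is dual to the socle $R_{3d-6}$ and is one-dimensional; for $0\neq\sansu\in\sU$ the map $\sS_{d-3}\to Q'=\sT^{J(f)}_{-2d}$, $g\mapsto g\sansu$, implements the Macaulay pairing $R_{d-3}\otimes R_{2d-3}\to R_{3d-6}$ (with $R_{d-3}=\sS_{d-3}$ since $d-3<d-1$), which is perfect by the Gorenstein property. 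Hence $g\mapsto g\sansu$ is an isomorphism, and composing its inverse with Serre duality yields $\theta$.

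For $\Phi$ the classical computation applies: the absolute Frobenius on $\cO_C$ extends to a chain map of the ideal sheaf SES with vertical maps $a\mapsto a^p$ on $\cO_{\mP^2}$ and $a\mapsto f^{p-1}a^p$ on $\cO_{\mP^2}(-d)$; passing to $H^2$ gives $\Phi[A]=[f^{p-1}A^p]$, i.e.\ the Hasse--Witt operator. The harder part—and the principal obstacle of the proof—is to identify $\Psi$ with the Dieudonn\'e datum. I would realise $H^1_\dR(C)$ as the \v{C}ech hypercohomology on $\mP^2$ of a Koszul-type resolution of $\Omega^\bullet_{C/k}$, assembled from the ideal sheaf sequence together with the conormal sequence $0\to\cO_C(-d)\xrightarrow{df}\Omega^1_{\mP^2}|_C\to\Omega^1_C\to 0$, so that the Hodge SES $0\to Q^\vee\to H^1_\dR\to Q\to 0$ becomes explicit. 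Absolute Frobenius on this model acts by $p$-th power on the $\cO$-components and by zero on the $\Omega^1$-components (as $d(a^p)=0$ in characteristic~$p$), so for $[A]\in\Ker(\Phi)$, lifting to a de~Rham cocycle and applying Frobenius automatically lands in $F^1 H^1_\dR=Q^\vee$. The key calculational input is the characteristic-$p$ identity
\[
\tfrac{\partial}{\partial X_j}\bigl(f^{p-1}A^p\bigr) \;=\; -\tfrac{\partial f}{\partial X_j}\cdot f^{p-2}A^p\, ,
\]
which simultaneously shows that $f^{p-2}A^p$ lies in $Q'$ whenever $f^{p-1}A^p\in L$, and supplies the correction term needed to lift $f^{p-1}A^p$ back across the connecting map. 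Carrying out the \v{C}ech bookkeeping then gives $\Psi[A]=\theta[f^{p-2}A^p]$; this bookkeeping (signs, lifts and residue identifications) is the main technical obstacle. Once done, the HW-triple axioms for $(Q,\Phi,\Psi)$ and the identification of its associated Dieudonn\'e module with that of $J[p]$ both follow from Theorem~\ref{thm:DM=HW}.
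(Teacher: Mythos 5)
Your strategy is essentially the paper's.  You set up the isomorphism $Q \cong H^1(C,\cO_C)$ via the ideal-sheaf sequence exactly as the paper does with the (length-two) Koszul complex for $n=2$; you compute $\Phi$ by the same chain map $a\mapsto f^{p-1}a^p$ on $\cO_\mP(-d)$, $a\mapsto a^p$ on $\cO_\mP$, matching Proposition~\ref{prop:PhiFormula}; and the formula for $\Psi$ rests on the same key identity
\[
\tfrac{\partial}{\partial X_j}\bigl(f^{p-1}A^p\bigr)=(p-1)\,\tfrac{\partial f}{\partial X_j}\cdot f^{p-2}A^p,
\]
which the paper states immediately after Proposition~\ref{prop:PsiFormula}.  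Two mild divergences.  First, you obtain the one-dimensionality of~$\sU$ and the perfectness of the residue pairing from Macaulay/Gorenstein duality of the Jacobian ring, whereas the paper gets them from the isomorphism $\eta\colon\Omega^1_C\isomarrow\cO_C(d-3)$ and the compatibility with the cup-product pairing on~$C$ (Lemma in~\ref{ssec:H0Omega1} and the Proposition in~\ref{ssec:theta}); the two are classically equivalent, and the resulting scalar ambiguity in normalising $\sansu$ is absorbed by the equivalence relation of Proposition~\ref{prop:EquivHW}, so this is fine.  Second, you frame the $\Psi$-computation as a Frobenius action on a \v Cech hypercohomology model of $\Omega^\bullet_{C/k}$ built on~$\mP^2$; the paper instead works directly in the \v Cech--Koszul double complex for $\cO_C$ and passes to $H^0(\Omega^1_C)$ via the conormal boundary map.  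These are the same computation in different clothing.  The one substantive issue is that what you call ``the \v Cech bookkeeping (signs, lifts and residue identifications)'' is precisely the content of the paper's Proposition~\ref{prop:PsiFormula}, i.e.\ the explicit cocycle $\gamma$ on the \v Cech--Koszul double complex and the verification that it is a cocycle and reduces to the required class; this is the hardest part of the proof and cannot simply be deferred, even though for plane curves the double complex is small enough that it is manageable.  You have the right framework, the right cohomological identifications, and the right key identity; what is missing is carrying the cocycle computation through to the end.
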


In conclusion, to compute the Ekedahl--Oort type of~$C$ it suffices to enhance the pair consisting of the space $H^1(C,\cO_C)$ with its Hasse--Witt operator~$\Phi$ to a Hasse--Witt triple, and for the new ingredient, the operator~$\Psi$, we have a formula which is just as elegant as, and in fact very similar to, the classical formula for~$\Phi$.

\begin{acknowl}
My sincerest thanks go to Wieb Bosma, for all his help with the Magma-implementation of the above theorem; see Section~\ref{sec:Magma}. I should also like to thank the referees for helpful comments and suggestions, and one referee in particular for providing a cleaned-up version of the Magma code, which is available on my webpage.
\end{acknowl}

\section{Dieudonn\'e modules and Hasse--Witt triples}
\label{sec:DM}

\subsection{}
\label{ssec:DM1}
Let $k$ be a perfect field of characteristic $p>0$ with Frobenius automorphism $\sigma \colon k \to k$. We are interested in Dieudonn\'e modules $(M,F,V)$, where:
\begin{enumerate}
\item $M$ is a finite dimensional $k$-vector space;
\item $F \colon M \to M$ is $\sigma$-linear and $V \colon M \to M$ is $\sigma^{-1}$-linear;
\item $\Ker(F) = \Image(V)$ and $\Ker(V) = \Image(F)$.
\end{enumerate}
These are precisely the Dieudonn\'e modules associated with $p$-kernels of $p$-divisible groups. The term \emph{Dieudonn\'e module} will henceforth refer to triples $(M,F,V)$ satisfying (1)--(3). We usually denote such a Dieudonn\'e module by the single letter~$M$, leaving $F$ and~$V$ implicit.

Let $M$ be a Dieudonn\'e module over~$k$. By a \emph{polarization} of~$M$ we mean a non-degenerate alternating bilinear form $b \colon M \times M \to k$ such that
\begin{equation}\label{eq:bFxy}
b\bigl(F(x),y\bigr) = b\bigl(x,V(y)\bigr)^p \qquad \text{for all $x$, $y \in M$.}
\end{equation}
If such a polarization exists, the dimension of~$M$ is even, say $\dim_k(M) = 2g$, and the subspaces $\Ker(F) \subset M$ and $\Ker(V) \subset M$ are maximal isotropic.

If $(M,F,V,b)$ is a polarized Dieudonn\'e module, the Verschiebung~$V$ can be recovered from the remaining ingredients by~\eqref{eq:bFxy}.

\subsection{}\label{ssec:classif}
Fix an integer $g \geq 1$ and let $(W,S)$ denote the Weyl group of the reductive group $\Sp_{2g}$. Concretely,
\[
W = \bigl\{ \pi \in \gS_{2g} \bigm| \pi(i) + \pi(2g+1-i) = 2g+1\quad \text{for all $i \in \{1,\ldots,2g\}$} \bigr\}
\]
($\gS_n$ = symmetric group on $n$ leters), and $S = \{s_1,\ldots,s_g\}$ with $s_i = (i\quad i+1)\, (2g-i\quad 2g-i+1)$ for $i<g$ and $s_g = (g\quad g+1)$. Let $X = S\setminus\{s_g\}$, and let $W_X \subset W$ be the subgroup generated by~$X$. Then $W_X \cong \gS_g$ and $W_X\backslash W$ is a set of $2^g$ elements. For every class in $[w] \in W_X\backslash W$ there is a unique representative $\dot{w} \in W$ of minimal length.

To a polarized Dieudonn\'e module~$M$ with $\dim_k(M) = 2g$ we can associate an element $w(M) \in W_X\backslash W$. Here we only give a quick summary of how this is done; for details see \cite{GSAS} and~\cite{OortE33}. The first step is to build a symplectic flag
\[
\cD_\bullet\; :\qquad (0) = \cD_0 \subsetneq \cD_1 \subsetneq \cdots \subsetneq \cD_r = M\, ,
\]
in the following way: We start with $(0) \subset M$. Then we apply the operations~$F$ (taking the image under~$F$) and $V^{-1}$ (taking the pre-image under~$V$) to each term, which yields $(0) \subset V^{-1}(0) = FM \subset M$. This procedure we iterate. After finitely many iterations the process stabilizes; define $\cD_\bullet$ to be the flag in~$M$ that is obtained. Put differently, $\cD_\bullet$ is the coarsest flag that is stable under $F$ and~$V^{-1}$, in the sense that for any term~$\cD_i$ there are indices $a$ and~$b$ such that $F\cD_i = \cD_a$ and $V^{-1}\cD_i = \cD_b$.

This ``canonical flag''~$\cD_\bullet$ is not, in general, a full flag in~$M$. Let $\tilde{\cD}_\bullet$ be a refinement of~$\cD_\bullet$ to a full symplectic flag. Then the relative position of the flags $(0) \subset \Ker(F) \subset M$ and~$\tilde{\cD}_\bullet$ is an element
\[
\relpos\bigl(\Ker(F),\tilde{\cD}_\bullet\bigr) \in W_X\backslash W
\]
(see \cite{GSAS}, Section~3) that turns out to be independent of how we choose the refinement~$\tilde{\cD}_\bullet$. Now define $w(M) = \relpos\bigl(\Ker(F),\tilde{\cD}_\bullet\bigr)$.

The following result is based on a classification result for Dieudonn\'e modules that was proven by Kraft~\cite{Kraft} (unpublished) and was later re-obtained by Ekedahl and Oort, see~\cite{OortE33}. The result as we state it can also be found (stated using a different encoding) in \cite{OortTexel}, Section~9; it is a special case of the results in~\cite{GSAS}.

\begin{theorem}
\label{thm:ClassifBT1}
Let $k$ be an algebraically closed field of characteristic~$p$. Then $M \mapsto w(M)$ gives a bijection
\[
\Biggl\{
\vcenter{
\setbox0=\hbox{isomorphism classes of polarized Dieudonn\'e modules}
\copy0\hbox to\wd0{\hfil $(M,F,V,b)$ over~$k$ with $\dim_k(M) = 2g$\hfil}}\Biggr\} \isomarrow W_X\backslash W\, .
\]
\end{theorem}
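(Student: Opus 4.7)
My plan is to prove well-definedness, surjectivity, and injectivity of the map $M \mapsto w(M)$ in turn. For well-definedness, note that the canonical flag $\cD_\bullet$ is intrinsic to $(M,F,V)$. From~\eqref{eq:bFxy} and the non-degeneracy of $b$ one deduces $F(\cD_i)^\perp = V^{-1}(\cD_i^\perp)$, so taking $b$-perpendiculars conjugates $F$ into $V^{-1}$. Since $\cD_\bullet$ is generated from $(0)$ by iterated application of $F$ and $V^{-1}$, it is self-dual under $b$, hence a symplectic flag whose stabilizer in $\Sp(M,b)$ is a standard parabolic $P_X$ with relative Weyl group $W_X$. Two full symplectic refinements of $\cD_\bullet$ differ by an element of $P_X(k)$, so $\relpos\bigl(\Ker(F),\tilde{\cD}_\bullet\bigr)$ is well-defined in $W_X\backslash W$; isomorphism invariance is then automatic.

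For surjectivity, to each class with minimal representative $\dot w \in W$ I would attach a standard model $M_{\dot w} = k^{2g}$ with the usual symplectic basis $e_1,\ldots,e_{2g}$ and form. The permutation $\dot w$ combinatorially prescribes which basis vectors lie in $\Image(V) = \Ker(F)$ and how $F$ maps the remaining ones to other basis vectors; the resulting $\sigma$-linear operator $F$ then determines $V$ via~\eqref{eq:bFxy}. A short verification shows that $M_{\dot w}$ is a polarized Dieudonn\'e module whose canonical flag is the one stabilized by $P_X$, and that $w(M_{\dot w}) = [\dot w]$.

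For injectivity, given $(M,F,V,b)$ with $w(M) = [\dot w]$, choose a refinement $\tilde{\cD}_\bullet$ of $\cD_\bullet$ with $\relpos(\Ker(F),\tilde{\cD}_\bullet) = \dot w$, together with a symplectic basis $e_1,\ldots,e_{2g}$ of $M$ adapted to it. The combinatorial datum $\dot w$ forces $F(e_i) = \lambda_i\, e_{\dot w(i)}$ for some $\lambda_i\in k^\times$ when $e_i\notin\Ker(F)$, and $F(e_i)=0$ otherwise; analogously for $V$. The residual freedom is to rescale $e_i$ by scalars $\mu_i\in k^\times$ with $\mu_i\mu_{2g+1-i} = 1$, which changes each $\lambda_i$ into $\mu_i^p\lambda_i\mu_{\dot w(i)}^{-1}$. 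The main obstacle is to solve the resulting multiplicative $\sigma$-twisted system so as to normalize all $\lambda_i$ to $1$: on each $\dot w$-orbit of length~$r$ the consistency condition reduces, after iteration, to a single equation of the form $\mu^{p^r-1} = c$ with $c\in k^\times$, which is soluble precisely because $k$ is algebraically closed---essentially Lang's theorem for the diagonal torus of $\Sp_{2g}$. Organizing this orbit analysis in the presence of the symplectic pairings is the technical heart of the Kraft--Ekedahl--Oort classification \cite{Kraft,OortE33}, and yields the desired isomorphism $M\isomarrow M_{\dot w}$.
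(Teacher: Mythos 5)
The paper does not actually prove Theorem~\ref{thm:ClassifBT1}: it states it as a known classification due to Kraft and Ekedahl--Oort and refers to \cite{OortTexel} and \cite{GSAS} for the statement in this form. So there is no in-paper proof to compare with; your sketch has to stand on its own. The overall plan you propose --- self-duality of the canonical flag, standard models indexed by minimal representatives for surjectivity, and a Lang-theorem rescaling on Kraft cycles for injectivity --- is indeed the strategy of the cited sources, and the appeal to $k = \kbar$ via $x\mapsto x^{p^r}/x$ is exactly where algebraic closedness enters.

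However, the well-definedness argument has a genuine error. You assert that the stabilizer of $\cD_\bullet$ in $\Sp(M,b)$ is the standard parabolic $P_X$ with relative Weyl group $W_X$. This is false in general: $P_X$ (of type $X = S\setminus\{s_g\}$) stabilizes a single Lagrangian, while the stabilizer of the partial symplectic flag $\cD_\bullet$ is a parabolic $P_{\cD_\bullet}$ whose type depends on the jump sizes of $\cD_\bullet$, and these agree only when $\cD_\bullet$ has exactly one proper nonzero term. Note also that the Lagrangian relevant to $W_X\backslash W$ is $\Ker(F) = \Image(V)$, which is generally \emph{not} a term of $\cD_\bullet$ (the canonical flag contains $\Image(F) = \Ker(V)$, its self-dual twin, but not $\Ker F$). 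Consequently, two full refinements of $\cD_\bullet$ differ by $P_{\cD_\bullet}(k)$, and the argument as written only shows $\relpos(\Ker F,\tilde\cD_\bullet)$ is well-defined in the double coset space $W_X\backslash W / W_{\cD_\bullet}$, not in $W_X\backslash W$. That it descends to a single left $W_X$-coset is a non-trivial fact resting on $\cD_\bullet$ being the coarsest $F$- and $V^{-1}$-stable flag, i.e., precisely what is established in \cite{GSAS}, Section~3; your argument does not deliver it. Finally, the surjectivity verification and, more seriously, the injectivity argument are deferred (``a short verification shows\ldots'', ``organizing this orbit analysis\ldots is the technical heart''); in the polarized setting one additionally needs that a polarization on a fixed Dieudonn\'e module is unique up to isomorphism --- the paper quotes this as Oort's theorem, fact~(2) in the proof of Proposition~\ref{prop:EquivHW} --- and your sketch does not address it.
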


\begin{remark}
While from a theoretical perspective the encoding in terms of Weyl group cosets is the most natural choice, there are other ways to encode the isomorphism class of a given polarized Dieudonn\'e module. For a nice discussion, see \cite{PriesUlmer}, Section~3. Let us note that it is very easy to read off from~$w(M)$ more basic invariants of~$M$ such as the $p$-rank or the $a$-number. For instance, if $\dot{w} \in W$ is the (unique) representative of~$w(M)$ of minimal length then the $p$-rank of~$M$ equals the number of indices $i \in \{1,\ldots,g\}$ such that $\dot{w}(i) = i+g$, and the $a$-number is given by the number of indices $i \in \{1,\ldots,g\}$ such that $\dot{w}(i) \in \{1,\ldots,g\}$.
\end{remark}

\subsection{}\label{ssec:DMtoHW}
In the next section we will describe a method to compute, for certain classes of curves~$C$ over a field~$k$ of characteristic~$p$, the isomorphism class of the group scheme $J[p]$ over~$\kbar$, where $J$ is the Jacobian of~$C$. This isomorphism class is referred to as the Ekedahl--Oort type of~$C$. Our method is based on the observation that the Dieudonn\'e module of~$J[p]$ can be reconstructed from some data that are easier to compute, as we shall now explain.

As before, let $k$ be a perfect field of characteristic $p>0$. By a \emph{HW-triple over~$k$} (short for ``Hasse--Witt triple'') we mean a triple $(Q,\Phi,\Psi)$ where
\begin{enumerate}
\item $Q$ is a finite dimensional $k$-vector space;
\item $\Phi \colon Q \to Q$ is a $\sigma$-linear map;
\item $\Psi \colon \Ker(\Phi) \isomarrow \Image(\Phi)^\perp$ is a $\sigma$-linear bijective map.
\end{enumerate}
Here $\Image(\Phi)^\perp \subset Q^\vee$ is the subspace given by
\[
\Image(\Phi)^\perp = \bigl\{\lambda \in Q^\vee \bigm| \lambda(q) = 0\quad \text{for all $q \in \Image(\Phi)$} \bigr\}\, .
\]
(Instead of $\Image(\Phi)^\perp$ we could also write $\Coker(\Phi)^\vee$.) The name alludes to the fact that, in the example of a curve~$C$ over~$k$, the map~$\Phi$ that we will consider is the Hasse--Witt operator on the space $Q = H^1(C,\cO_C)$. Note that the spaces $\Ker(\Phi)$ and $\Image(\Phi)^\perp$ have the same dimension, so to check bijectivity of a $\sigma$-linear map $\Psi \colon \Ker(\Phi) \to \Image(\Phi)^\perp$, it suffices to establish either its injectivity or surjectivity.

To a polarized Dieudonn\'e module $(M,F,V,b)$ we can associate a HW-triple by taking $Q = M/\Ker(F)$ with $\Phi$ given by the composition
\[
M/\Ker(F) \xrightarrow{~F~} M \twoheadrightarrow M/\Ker(F)\, .
\]
For $x \in \Ker(\Phi)$ we have $F(x) \in \Ker(F) \subset M$, and because $\Ker(F)$ is a maximal isotropic subspace of~$M$ the linear map $b\bigl(-,F(x)\bigr) \colon M \to k$ is an element of $Q^\vee = \bigl(M/\Ker(F)\bigr)^\vee \subset M^\vee$. Define $\Psi \colon \Ker(\Phi) \to Q^\vee$ by $\Psi(x) = b\bigl(-,F(x)\bigr)$. Note that $\Psi$ is injective, because $b\bigl(-,F(x)\bigr) = 0$ implies that $F(x) = 0$ and $F$ is injective on $Q = M/\Ker(F)$. Further, for $x \in \Ker(\Phi)$ and $y \in Q$ we have $b\bigl(\Phi(y),F(x)\bigr) = 0$ because $\Phi(y) \in Q$ is the class of $F(y)$ modulo~$\Ker(F)$ and $\Image(F) \subset M$ is an isotropic subspace. This shows that $\Psi$ takes values in~$\Image(\Phi)^\perp$. As remarked above, the injectivity of~$\Psi$ implies that it is bijective; so $(Q,\Phi,\Psi)$ is indeed a HW-triple.

\subsection{}\label{ssec:HWtoDM}
In the opposite direction, we can associate to a HW-triple $(Q,\Phi,\Psi)$ a polarized Dieudonn\'e module, well-determined up to isomorphism. This works as follows. Define $M = Q \oplus Q^\vee$, and let $b \colon M \times M \to k$ be the form given by $b\bigl((q,\lambda),(q^\prime,\lambda^\prime)\bigr) = \lambda^\prime(q) - \lambda(q^\prime)$.

Write $R_1 = \Ker(\Phi) \subset Q$ and choose a subspace $R_0 \subset Q$ which is a complement of~$R_1$. Then we can define
\[
F \colon M = R_0 \oplus R_1 \oplus Q^\vee \longrightarrow M = Q \oplus Q^\vee
\]
by $F(r_0,r_1,\lambda) = \bigl(\Phi(r_0),\Psi(r_1)\bigr)$, and we let $V$ be the unique map such that \eqref{eq:bFxy} is satisfied. To describe~$V$ more explicitly, note that the maps
\[
\Psi \colon R_1 \isomarrow \Image(\Phi)^\perp = \bigl(Q/\Image(\Phi)\bigr)^\vee \subset Q^\vee
\qquad\text{and}\qquad
\Phi \colon R_0 \isomarrow \Image(\Phi) \subset Q
\]
dualize to $\sigma^{-1}$-linear maps $\Psi^\vee \colon Q/\Image(\Phi) \isomarrow R_1^\vee$ and $\Phi^\vee \colon Q^\vee/\Image(\Phi)^\perp = \Image(\Phi)^\vee \isomarrow R_0^\vee$; now
\[
V \colon M = Q \oplus Q^\vee \longrightarrow M = Q \oplus R_0^\vee \oplus R_1^\vee
\]
is given by
\[
V(q,\lambda) = \bigl(0,\Phi^\vee(\lambda \bmod \Image(\Phi)^\perp),-\Psi^\vee(q \bmod \Image(\Phi))\bigr)\, .
\]
As it is clear that $\Image(F) = \Image(\Phi) \oplus \Image(\Phi)^\perp = \Ker(V)$ and $\Ker(F) = (0) \oplus Q^\vee = \Image(V)$, it follows that $(M,F,V,b)$ is a polarized Dieudonn\'e module.

\begin{lemma}\label{lem:IndepQ1}
In the construction of \emph{\ref{ssec:HWtoDM}}, the isomorphism class of the polarized Dieudonn\'e module $(M,F,V,b)$ is independent of the choice of $R_0 \subset Q$.
\end{lemma}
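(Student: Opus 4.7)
The constructions of $M = Q \oplus Q^\vee$ and of the form $b$ are both independent of the complement $R_0$; only $F$ (and hence $V$) changes when $R_0$ is replaced by another complement $R_0'$ of $R_1 = \Ker(\Phi)$. Writing $F'$, $V'$ for the operators associated to $R_0'$, I therefore aim to construct a symplectic automorphism $\phi$ of $(M,b)$ satisfying $\phi \circ F = F' \circ \phi$; since the polarization relation~\eqref{eq:bFxy} recovers $V$ from $F$ and $b$, the relation $\phi \circ V = V' \circ \phi$ will follow automatically. The natural ansatz is
\[
\phi(q,\lambda) = \bigl(q,\, \lambda + T(q)\bigr)
\]
for some $k$-linear $T \colon Q \to Q^\vee$. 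A direct check shows that any such $\phi$ is a $k$-linear automorphism of $M$ with inverse $(q,\lambda) \mapsto (q, \lambda - T(q))$, and that it preserves $b$ precisely when the associated bilinear form $B_T(q,q') := T(q)(q')$ on $Q$ is symmetric.

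The two complements are related by a unique $k$-linear map $\alpha \colon R_0 \to R_1$ with $R_0' = \{r_0 + \alpha(r_0) \mid r_0 \in R_0\}$. Using $\Phi|_{R_1} = 0$ and the explicit formulas of~\ref{ssec:HWtoDM}, I would verify that $F$ and $F'$ share the first component $\Phi$ and that their second components differ by the $\sigma$-linear map $q \mapsto -\Psi(\alpha(\pi_0(q)))$, where $\pi_0 \colon Q \twoheadrightarrow R_0$ is the projection along $R_1$. Since $F$ and $F'$ both annihilate the subspace $Q^\vee \subset M$, the intertwining relation $\phi \circ F = F' \circ \phi$ collapses to a single equation on the $Q$-component:
\[
T\bigl(\Phi(r_0)\bigr) = -\Psi\bigl(\alpha(r_0)\bigr) \qquad \text{for all $r_0 \in R_0$.}
\]
Because $\Phi$ restricts to a $\sigma$-linear bijection $R_0 \isomarrow \Image(\Phi)$, this uniquely prescribes $T$ on $\Image(\Phi) \subset Q$ as a $k$-linear map.

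The technical heart of the argument is to verify that this prescription is compatible with the symmetry of $B_T$, and it is here that the HW-triple axioms enter decisively. Since $\Psi$ takes values in $\Image(\Phi)^\perp$ by assumption, the prescribed values of $T$ on $\Image(\Phi)$ lie in $\Image(\Phi)^\perp$; equivalently, $B_T$ vanishes identically on $\Image(\Phi) \times \Image(\Phi)$, which is trivially symmetric there. To finish, I would choose any complement $C$ to $\Image(\Phi)$ in $Q$, define $B_T$ on $C \times \Image(\Phi)$ by enforcing symmetry, i.e.\ $B_T(c,\Phi(r_0)) := -\Psi(\alpha(r_0))(c)$, and set $B_T \equiv 0$ on $C \times C$. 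The resulting $B_T$ is symmetric by construction and compatible with the intertwining constraint; the corresponding $\phi$ is then the desired isomorphism of polarized Dieudonné modules. The main obstacle in the proof is thus this compatibility check, and it is overcome precisely by the axiom $\Image(\Psi) \subset \Image(\Phi)^\perp$ built into the definition of a HW-triple.
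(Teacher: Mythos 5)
Your proof is correct and follows essentially the same route as the paper: you encode the change of complement by a linear map $\alpha\colon R_0\to R_1$ (the paper's $t$), seek an automorphism of the form $(q,\lambda)\mapsto(q,\lambda+T(q))$, reduce the intertwining to $T\circ\Phi=-\Psi\circ\alpha$ on $R_0$, and observe that $\Image(\Psi)\subset\Image(\Phi)^\perp$ is precisely what allows the resulting partial map to be extended to a self-dual $T$. The only difference is cosmetic — you spell out the explicit symmetric extension via a choice of complement to $\Image(\Phi)$, where the paper simply asserts that such an extension exists.
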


\begin{proof}
Let $R_0^\prime \subset Q$ be another complement of~$R_1$, and let $F^\prime \colon M \to M$ and $V^\prime \colon M \to M$ be the associated Frobenius and Verschiebung on~$M$. There exists a linear map $t \colon R_0 \to R_1$ such that $R_0^\prime \subset Q = R_0 \oplus R_1$ is the graph of~$t$. Let $m = (q,\lambda)$ be an element of~$M$. If $q = (r_0,r_1) \in R_0 \oplus R_1$ then $q = \bigl(r_0,t(r_0)\bigr) + \bigl(0,r_1-t(r_0)\bigr)$ with $\bigl(r_0,t(r_0)\bigr) \in R_0^\prime$ and $\bigl(0,r_1-t(r_0)\bigr) \in R_1$. Because $R_1 = \Ker(\Phi)$ we find that
\[
F(m) = \bigl(\Phi(r_0),\Psi(r_1)\bigr)
\qquad\text{and}\qquad
F^\prime(m) = \bigl(\Phi(r_0),\Psi(r_1- t(r_0))\bigr)\, .
\]
The main point is now to show that there exists a self-dual linear map $u \colon Q \to Q^\vee$ such that $u \circ \Phi = - \Psi \circ t$ as maps from~$R_0$ to~$Q^\vee$. Since the map $\Phi \colon R_0 \to Q$ is injective and $\Image(\Phi) = \Phi(R_0)$, we can define a linear map $u_0 \colon \Image(\Phi) \to \Image(\Phi)^\perp \subset Q^\vee$ by the rule $u_0\bigl(\Phi(r_0)\bigr) = - \Psi\bigl(t(r_0)\bigr)$. As the domain of~$u_0$ is~$\Image(\Phi)$ and $u_0$ takes values in $\Image(\Phi)^\perp$, we can find a self-dual $u \colon Q \to Q^\vee$ that extends~$u_0$. Now define $\alpha \colon M \to M$ by $\alpha(q,\lambda) = (q,\lambda+u(q))$. Then one readily verifies that $\alpha$ is a symplectic automorphism of~$(M,b)$ with $\alpha \circ F = F^\prime \circ \alpha$, and hence also $\alpha \circ V = V^\prime \circ \alpha$.
\end{proof}

\begin{theorem}\label{thm:DM=HW}
Let $k$ be a perfect field of characteristic~$p$. Then the construction given in \emph{\ref{ssec:DMtoHW}} gives a bijection
\[
\Biggl\{
\vcenter{
\setbox0=\hbox{isomorphism classes of polarized Dieudonn\'e}
\copy0\hbox to\wd0{\hfil modules $(M,F,V,b)$ with $\dim_k(M) = 2g$\hfil}}\Biggr\}
\isomarrow
\Biggl\{
\vcenter{
\setbox0=\hbox{isomorphism classes of HW-triples}
\copy0\hbox to\wd0{\hfil $(Q,\Phi,\Psi)$ over~$k$ with $\dim_k(Q) = g$\hfil}}\Biggr\}
\]
whose inverse is given by the construction in \emph{\ref{ssec:HWtoDM}}.
\end{theorem}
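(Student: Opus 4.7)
The plan is to check that both constructions descend to isomorphism classes and that they are mutually inverse. The descent for \ref{ssec:DMtoHW} is immediate: any symplectic isomorphism $(M,F,V,b) \isomarrow (M',F',V',b')$ passes to the quotients $M/\Ker(F)$ and intertwines the induced $\Phi$ and $\Psi$. For \ref{ssec:HWtoDM}, Lemma~\ref{lem:IndepQ1} handles the choice of $R_0$, while an isomorphism $f : (Q,\Phi,\Psi) \isomarrow (Q',\Phi',\Psi')$ of HW-triples induces $f \oplus (f^\vee)^{-1}$ on $Q \oplus Q^\vee$, which intertwines all the structures after transporting $R_0$ to $f(R_0)$.

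The first composite (HW-triple $\to$ DM $\to$ HW-triple) is the identity on the nose. Given $(Q,\Phi,\Psi)$, the module $M = Q \oplus Q^\vee$ from \ref{ssec:HWtoDM} has $\Ker(F) = (0) \oplus Q^\vee$, so $Q \isomarrow M/\Ker(F)$ canonically. A direct unwinding shows that the $\sigma$-linear endomorphism of $M/\Ker(F)$ induced by~$F$ is $\Phi$, and that for $r_1 \in \Ker(\Phi)$ the functional $b\bigl(-,F(r_1)\bigr)|_Q \in Q^\vee$ is exactly $\Psi(r_1)$; hence the HW-triple associated to $M$ equals $(Q,\Phi,\Psi)$.

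The second composite (DM $\to$ HW-triple $\to$ DM) is the substantive direction. Given $(M,F,V,b)$, set $Q = M/\Ker(F)$ with its induced $(\Phi,\Psi)$ and build $M' = Q \oplus Q^\vee$ using a chosen complement $R_0$ of $R_1 := \Ker(\Phi)$. Because $\Ker(F) \subset M$ is maximal isotropic, a Lagrangian complement $N \subset M$ exists; the pair $(N,\Ker(F))$ together with $b$ identifies $M$ with $Q \oplus Q^\vee$ as a symplectic space. Under this identification, the Frobenius of $M$ takes the form $(q,\lambda) \mapsto \bigl(\Phi(q),\mu(q)\bigr)$ for a $\sigma$-linear map $\mu : Q \to Q^\vee$ with $\mu|_{R_1} = \Psi$, while the Frobenius $F'$ of $M'$ sends $(r_0,\lambda) \mapsto (\Phi(r_0),0)$ on~$R_0$. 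Imitating Lemma~\ref{lem:IndepQ1}, it suffices to produce a self-dual linear map $u : Q \to Q^\vee$ with $u \circ \Phi = -\mu$ on~$R_0$: then $\alpha(q,\lambda) = (q,\lambda+u(q))$ is a symplectic automorphism of $M'$ intertwining the two Frobenii, and the Verschiebungs are matched automatically via~\eqref{eq:bFxy}.

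The main obstacle is the construction of~$u$, which reduces to showing that the pairing $(r_0,r_0') \mapsto \mu(r_0)\bigl(\Phi(r_0')\bigr)$ on $R_0$ is symmetric. This symmetry is exactly the isotropy of $\Image(F) \subset M$ read off in our coordinates: from $\Ker(V) = \Image(F)$ we have $VF = 0$, so \eqref{eq:bFxy} yields $b(Fx,Fy) = b(x,VFy)^p = 0$; applied to $x = r_0$ and $y = r_0'$ and expanded in the $Q \oplus Q^\vee$ coordinates, this becomes $\mu(r_0)\bigl(\Phi(r_0')\bigr) = \mu(r_0')\bigl(\Phi(r_0)\bigr)$. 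One then defines $u_0 : \Image(\Phi) \to Q^\vee$ by $\Phi(r_0) \mapsto -\mu(r_0)$ and extends to a self-dual $u$ on all of $Q$ exactly as in Lemma~\ref{lem:IndepQ1}, completing the argument.
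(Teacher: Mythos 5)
Your proposal is correct, and in the substantive direction (DM $\to$ HW $\to$ DM) it takes a genuinely different route from the paper. The paper builds the isometry $h \colon Q \oplus Q^\vee \isomarrow M$ from a Lagrangian complement $L$ of $\Ker(F_M)$ that is chosen \emph{cleverly}, namely so that $F_Q(R_0) \subset L$; the isotropy of $\Image(F_M)$ is what makes such a choice possible, and with it $h$ intertwines the Frobenii on the nose, with no further correction. You instead pick \emph{any} Lagrangian complement, so that under the resulting identification $F_M$ acquires a ``tail'' $\mu$ on the $Q^\vee$-component, and you then absorb the discrepancy $\mu|_{R_0}$ by a shear automorphism $\alpha(q,\lambda) = (q,\lambda + u(q))$, constructed exactly as in Lemma~\ref{lem:IndepQ1}. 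The isotropy of $\Image(F_M)$ enters your argument at a different point, as the symmetry of the pairing $(r_0,r_0') \mapsto \mu(r_0)\bigl(\Phi(r_0')\bigr)$ on $R_0$, which is what permits the extension to a self-dual $u$ on all of~$Q$. Note that this is a mild strengthening of the extension step used in Lemma~\ref{lem:IndepQ1}: there $u_0$ lands in $\Image(\Phi)^\perp$, so the relevant form on $\Image(\Phi)$ is identically zero and the self-dual extension is immediate, whereas here the form is merely symmetric; the extension is still routine, but it is worth flagging that you are using a slightly more general statement. The net effect is that your proof reuses the ``correct-by-symplectic-shear'' gadget uniformly across both Lemma~\ref{lem:IndepQ1} and the theorem, at the cost of an extra symmetry verification, whereas the paper's proof trades that uniformity for a one-shot choice of Lagrangian that makes the intertwining automatic. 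Both are sound; yours is arguably more systematic, the paper's arguably more economical.
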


\begin{proof}
Let $(Q,\Phi,\Psi)$ be a HW-triple, choose a $R_0 \subset Q$ which is a complement of $R_1 = \Ker(\Phi)$, and define $(M,F,V,b)$ as in~\ref{ssec:HWtoDM}. Now it is a routine verification that if we apply the construction of~\ref{ssec:DMtoHW} to $(M,F,V,b)$ we recover the original HW-triple.

In the opposite direction, start with a polarized Dieudonn\'e module $(M,F_M,V_M,b_M)$, and let $(Q,\Phi,\Psi)$ be the associated HW-triple. Define $N = Q\oplus Q^\vee$, and equip it with the alternating bilinear form~$b_N$ given by $b_N\bigl((q,\lambda),(q^\prime,\lambda^\prime)\bigr) = \lambda^\prime(q) - \lambda(q^\prime)$. Choose a complement $R_0 \subset Q$ of $R_1 = \Ker(\Phi)$, and let $F_N$ and~$V_N$ be the associated Frobenius and Verschiebung on~$N$, as in~\ref{ssec:HWtoDM}.

Let $\pr \colon M \to Q$ be the projection map, and choose a totally isotropic subspace $L \subset M$ (with respect to the form~$b_M$) such that $\Ker(F_M) \cap L = (0)$. The composition $L \hookrightarrow M \xrightarrow{\pr} Q$ is an isomorphism; write $j \colon Q \isomarrow L$ for the inverse. We then obtain an isometry $h\colon N \isomarrow M$ by sending $(q,\lambda) \in Q \oplus Q^\vee$ to $x_\lambda - j(q)$, where $x_\lambda \in \Ker(F_M)$ is the unique element with $\lambda(y) = b_M\bigl(x_\lambda,j(y)\bigr)$ for all $y \in Q$.

Let $F_Q \colon Q \hookrightarrow M$ be the map induced by~$F_M$. Note that $F_Q(R_0) \cap \Ker(F_M) = (0)$, and because $F_Q(R_0) \subset \Image(F_M)$ is an isotropic subspace of~$M$, we may choose the maximal isotropic $L \subset M$ as above in such a way that $F_Q(R_0) \subset L$. We are done if we can show that with this choice of~$L$ we have $F_M \circ h = h \circ F_N$. For this, let $(q,\lambda)$ be an element of $N = Q \oplus Q^\vee$, and write $q = r_0 + r_1$. Then $(F_M\circ h)\bigl(q,\lambda\bigr) = -F_M\bigl(j(q)\bigr) = -F_M\bigl(j(r_0)\bigr) - F_M\bigl(j(r_1)\bigr)$. On the other hand, $F_N(q,\lambda) = \bigl(\Phi(r_0),\Psi(r_1)\bigr)$, so that
\[
(h\circ F_N)\bigl(q,\lambda\bigr) = x_{\Psi(r_1)} - j\bigl(\Phi(r_0)\bigr)\, ,
\]
where $x_{\Psi(r_1)} \in \Ker(F_M)$ is the unique element with $\Psi(r_1)\bigl(y\bigr) = b\bigl(x_{\Psi(r_1)},j(y)\bigr)$ for all $y \in Q$. But if $\tilde{y} \in M$ is any representative of~$y$ then we have $\Psi(r_1)\bigl(y\bigr) = b\bigl(\tilde{y},F_Q(r_1)\bigr)$ by definition of~$\Psi$, and since we may choose $\tilde{y} = j(y)$ it follows that $x_{\Psi(r_1)} = -F_Q(r_1) = -F_M\bigl(j(r_1)\bigr)$. (Note that indeed $-F_Q(r_1) \in \Ker(F_M)$ because $r_1 \in R_1 = \Ker(\Phi)$.) On the other hand, $F_Q(r_0)$ and $j\bigl(\Phi(r_0)\bigr)$ are two elements of~$L$ that under the projection to~$Q$ both map to $\Phi(r_0)$; hence $j\bigl(\Phi(r_0)\bigr) = F_Q(r_0)$, which is the same as $F_M\bigl(j(r_0)\bigr)$. This shows that indeed $F_M \circ h = h \circ F_N$.
\end{proof}

\subsection{}\label{ssec:equiv}
There is an equivalence relation on HW-triples that is weaker than isomorphy and that will be convenient for us to use. Namely, if $(Q_1,\Phi_1,\Psi_1)$ and $(Q_2,\Phi_2,\Psi_2)$ are two HW-triples, we call a $k$-linear bijective map $f \colon Q_1 \to Q_2$ an \emph{equivalence} if there exist constants $c$, $d \in k^*$ such that $\Phi_2 \circ f = c\cdot (f \circ \Phi_1)$ and $\Psi_2 \circ f = d\cdot (f^{\vee,-1} \circ \Psi_1)$.

\begin{proposition}\label{prop:EquivHW}
Let $k$ be an algebraically closed field of characteristic $p > 0$. If two HW-triples over~$k$ are equivalent then they are in fact isomorphic.
\end{proposition}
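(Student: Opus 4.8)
The plan is to reduce the statement to rescaling the semi-linear operators by suitable units and then absorbing those units into an automorphism of $Q$. Suppose $f\colon Q_1 \to Q_2$ is an equivalence, with constants $c,d\in k^*$ so that $\Phi_2\circ f = c\cdot(f\circ\Phi_1)$ and $\Psi_2\circ f = d\cdot(f^{\vee,-1}\circ\Psi_1)$. Transporting the structure via $f$, we may as well assume $Q_1 = Q_2 = Q$, $\Phi_1 = \Phi$, $\Psi_1 = \Psi$, and $\Phi_2 = c\cdot\Phi$, $\Psi_2 = d\cdot\Psi$; so it suffices to show that the HW-triples $(Q,\Phi,\Psi)$ and $(Q,c\Phi,d\Psi)$ are isomorphic whenever $k = \kbar$. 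Note that $\Ker(c\Phi) = \Ker(\Phi)$ and $\Image(c\Phi)^\perp = \Image(\Phi)^\perp$, so the rescaled data indeed form a HW-triple.

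First I would handle the rescaling of $\Phi$ alone. I look for a $k$-linear automorphism $g\colon Q\to Q$ with $g\circ\Phi = (c\Phi)\circ g$, i.e.\ $g\Phi g^{-1} = c\Phi$. Because $k$ is algebraically closed, the Lang-type surjectivity of $\sigma$-twisted conjugation applies: scaling a $\sigma$-linear endomorphism by a unit does not change its isomorphism class. Concretely, one can write $c = \lambda^{-1}\sigma(\lambda)$ for some $\lambda\in k^*$ (surjectivity of the Artin--Schreier/Lang map $x\mapsto \sigma(x)/x$ on $\kbar^*$) and then take $g = \lambda\cdot\id_Q$: indeed $g\Phi g^{-1}(x) = \lambda\Phi(\lambda^{-1}x) = \lambda\sigma(\lambda)^{-1}\Phi(x) = c^{-1}\Phi(x)$, so after replacing $\lambda$ by its inverse we get $g\Phi g^{-1} = c\Phi$. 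Using a scalar automorphism is the key simplification, because a scalar $g=\lambda\cdot\id_Q$ automatically preserves $\Ker(\Phi)$ and, via $g^{\vee,-1} = \sigma\mapsto$ multiplication by $\lambda^{-1}$ on $Q^\vee$, acts on $\Image(\Phi)^\perp$ by a scalar as well; so it carries $(Q,\Phi,\Psi)$ to $(Q,c\Phi,\Psi')$ for some $\Psi'$ that still differs from $d\Psi$ only by a unit scalar.

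It then remains to absorb the residual discrepancy between $\Psi'$ and $d\Psi$, which is again multiplication by a single unit $d'\in k^*$ on the one-dimensional-worth-of-scaling freedom. Here I would use an automorphism of the form $g = \mu\cdot\id_Q$ that commutes with $\Phi$ on the nose — which forces $\mu\in k^*$ to satisfy $\mu = \sigma(\mu)$, i.e.\ $\mu\in\mF_p^*$, too restrictive — so instead I would not insist on fixing $\Phi$ exactly but rather track both scalars simultaneously. The clean way: choose $\lambda\in\kbar^*$ with $\sigma(\lambda)/\lambda$ equal to a prescribed value; then $g = \lambda\cdot\id_Q$ sends $\Phi\mapsto (\sigma(\lambda)/\lambda)\,\Phi$ and $\Psi\mapsto (\lambda/\sigma(\lambda))\cdot(\text{something})$... the point is that the two induced scalings are \emph{inverse} to each other, so a single scalar automorphism moves the pair $(c,d)$ along the curve $\{(t, t^{-1}\cdot\text{const})\}$. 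To reach an arbitrary target $(c,d)$ from $(1,1)$ one therefore generally needs a non-scalar automorphism. The robust argument is to invoke Theorem~\ref{thm:DM=HW}: pass to the associated Dieudonn\'e modules $M$ and $M'$ of $(Q,\Phi,\Psi)$ and $(Q,c\Phi,d\Psi)$ via the construction of~\ref{ssec:HWtoDM}, and show directly that $M\cong M'$ as polarized Dieudonn\'e modules. Scaling $\Phi$ by $c$ and $\Psi$ by $d$ scales $F$ on $M = Q\oplus Q^\vee$ block-diagonally; over $\kbar$, Lang's theorem for the symplectic similitude group (or a direct computation: replace $F$ by $a^{-1}F a$ for a suitable semisimple $a\in\GSp$, using surjectivity of $x\mapsto \sigma(x)x^{-1}$) produces an isometry conjugating $F$ to $F'$, hence by the uniqueness of $V$ in a polarized Dieudonn\'e module an isomorphism $M\cong M'$. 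Applying Theorem~\ref{thm:DM=HW} in the reverse direction yields $(Q,\Phi,\Psi)\cong(Q,c\Phi,d\Psi)$, as desired.

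The main obstacle is the bookkeeping in the last step: one must check that the rescaling of $(\Phi,\Psi)$ corresponds, under $\ref{ssec:HWtoDM}$, to a rescaling of $F$ (and correspondingly of $V$, via~\eqref{eq:bFxy}) that can be undone by an element of $\GSp(M,b)$ rather than merely $\GL(M)$, so that the polarization $b$ is preserved (up to scalar, which is harmless). Equivalently, one wants a single unit $e\in\kbar^*$ and an automorphism realizing $F\mapsto e\cdot F$ symplectically; the compatibility $b(Fx,y) = b(x,Vy)^p$ then pins down how $b$ must scale, and surjectivity of the Lang map on $\GSp$ over $\kbar$ closes the argument. I expect this to be a short but slightly fiddly verification; everything else is formal.
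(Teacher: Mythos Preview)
Your initial reduction is right and matches the paper: via transport of structure it suffices to show $(Q,\Phi,\Psi)\cong(Q,c\Phi,d\Psi)$ for arbitrary $c,d\in k^*$, and passing to the associated polarized Dieudonn\'e modules via Theorem~\ref{thm:DM=HW} is indeed the route the paper takes. You also correctly observe that a scalar automorphism $g=\lambda\cdot\id_Q$ moves the pair $(c,d)$ only along a one-parameter curve, so something more is needed.

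The gap is in the last step. Your appeal to ``Lang's theorem for $\GSp$'' does not do the job. Lang (or Lang--Steinberg) controls $\sigma$-conjugacy of \emph{invertible} elements: it tells you every element of a connected group is of the form $g^{-1}\sigma(g)$. But here $F$ is a degenerate $\sigma$-linear operator with $\Ker(F)=Q^\vee$, and what you need is an $h\in\Sp(M,b)$ with $hFh^{-1}=DF$ for $D=c\cdot\id_Q\oplus d\cdot\id_{Q^\vee}$. Any such $h$ must preserve both the Lagrangian $\Ker(F)=Q^\vee$ and the Lagrangian $\Image(F)=\Image(\Phi)\oplus\Image(\Phi)^\perp$, which in general are \emph{not} transverse (their intersection is $\Image(\Phi)^\perp$). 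So you are really working inside the stabilizer of a non-generic pair of Lagrangians, and there is no single Lang-type surjectivity statement that hands you~$h$. The sentence ``I expect this to be a short but slightly fiddly verification'' is where the actual content of the proposition is hiding, and it is not short.

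The paper resolves this step by an entirely different mechanism: it invokes Kraft's classification (every Dieudonn\'e module over~$\kbar$ is a direct sum of cyclic modules~$M_X$) together with Oort's uniqueness of polarizations, so that one may assume the Dieudonn\'e module is either a single self-dual cycle or a pair $M_X\oplus M_{\check X}$. In each of these two cases the paper writes down, by hand, coefficients $a=(a_i)$ depending on $c,d$ such that the twisted module $M_{X,a}$ carries the form~$b$ and has associated HW-triple exactly $(Q,c\Phi,d\Psi)$; since $M_{X,a}\cong M_X$ always, this finishes the argument. In other words, the paper's proof rests on the structure theory of $\mathrm{BT}_1$ Dieudonn\'e modules rather than on a group-theoretic Lang argument.
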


Note that the assumption that $k$ is algebraically closed cannot be dropped. The reader is invited to skip the proof upon first reading.

\begin{proof}
Let $(Q,\Phi,\Psi)$ be a HW-triple over~$k$, and choose a decomposition $Q = R_0 \oplus R_1$ with $R_1 = \Ker(\Phi)$. For $c$, $d \in k^*$, let $M_{c,d}$ be the polarized Dieudonn\'e module associated, via the construction in~\ref{ssec:HWtoDM}, with the HW-triple $(Q,c\cdot \Phi,d\cdot \Psi)$. By Theorem~\ref{thm:DM=HW}, it suffices to show that $M_{c,d} \cong M_{1,1}$. The proof of this is based on the results of Kraft in~\cite{Kraft}. As a first step we recall how to construct certain ``standard Dieudonn\'e modules''. Start with a natural number~$q$, a map $X \colon (\mZ/q\mZ) \to \{F,V\}$, and a $q$-tuple $a = (a_i)$ in $(k^*)^{\mZ/q\mZ}$. Let $\{e_i\}_{i\in \mZ/q\mZ}$ be the standard basis of $k^{\mZ/q\mZ}$. We give $M_{X,a} := k^{\mZ/q\mZ}$ the structure of a Dieudonn\'e module by declaring that $F(e_i) = a_i \cdot e_{i+1}$ if $X(i) = F$ and $V(a_i \cdot e_{i+1}) = e_i$ if $X(i) = V$. In combination with the requirement that $\Ker(F) = \Image(V)$ and $\Ker(V) = \Image(F)$, these rules uniquely determine the structure of a Dieudonn\'e module; for instance, if $X(i) = V$ then $V(a_i\cdot e_{i+1}) = e_i$ forces $F(e_i) = 0$. If $a_i = 1$ for all $i \in \mZ/q\mZ$ then we write $M_X$ instead of~$M_{X,a}$. Note that $M_{X,a} \cong M_X$ for all $a \in (k^*)^{\mZ/d\mZ}$; an isomorphism $M_X \isomarrow M_{X,a}$ is obtained by $e_i \mapsto c_i \cdot e_i$ for suitable constants~$c_i$. We will need the more general form~$M_{X,a}$ later.

As we shall discuss next, every polarized Dieudonn\'e module can be described in terms of the following two basic cases:

\emph{Case 1:} In the above construction, suppose $q=2r$ is even and the function $X \colon (\mZ/q\mZ) \to \{F,V\}$ has the property that $X(i+r) = F \iff X(i) = V$. Choose $\zeta \in k^*$ with $\zeta^{p^r} = -1$. Then the alternating bilinear form $b \colon M_X \times M_X \to k$ defined by the property that $b(e_i,e_j) = 0$ if $j\neq i+r$ and $b(e_i,e_{i+r}) = \zeta^{p^i}$ is a polarization of~$M_X$.

\emph{Case 2:} In the above construction, define $\check{X} \colon (\mZ/q\mZ) \to \{F,V\}$ by the rule $\check{X}(i) = F \iff X(i) = V$. Let $\{\check{e}_i\}_{i\in \mZ/q\mZ}$ denote the standard basis of~$M_{\check{X}}$. Then the form $b \colon (M_X\oplus M_{\check{X}}) \times (M_X\oplus M_{\check{X}}) \to k$ determined by the rules
\[
b(e_i,e_j) = 0 = b(\check{e}_i,\check{e}_j)\, ,\qquad b(e_i,\check{e}_j) = \delta_{ij}\, ,\qquad b(\check{e}_i,e_j) = -\delta_{ij}
\]
is a polarization of the Dieudonn\'e module $M_X\oplus M_{\check{X}}$.

We will make use of the following facts:
\begin{enumerate}
\item Every Dieudonn\'e module over~$k$ is isomorphic to a direct sum of Dieudonn\'e modules of the form~$M_X$.
This result is due to Kraft~\cite{Kraft}.
\item If $N$ is a Dieudonn\'e module over~$k$ that admits a polarization $b \colon N \times N \to k$ then this polarization is unique up to isomorphism. In other words: if $b^\prime$ is another polarization of~$N$ then $(N,b) \cong (N,b^\prime)$. This was proven by Oort; see \cite{OortTexel}, Theorem~9.4.
\item Every polarized Dieudonn\'e module is isomorphic to a direct sum of terms $(M_X,b)$ as in Case~1 and $(M_X \oplus M_{\check{X}},b)$ as in Case~2. (This follows without much difficulty from the previous two facts.)
\end{enumerate}

To conclude the proof we may now assume that $(Q,\Phi,\Psi)$ is the HW-triple associated with a polarized Dieudonn\'e module $(M_X,b)$ as in Case~1 or a polarized Dieudonn\'e module $(M_X \oplus M_{\check{X}},b)$ as in Case~2. We first treat Case~1. Let $c$, $d \in k^*$ be given. We define a sequence $(a_i)_{i\in \mZ/2r\mZ}$ as follows. If $X(i) = X(i+1) = F$, let $a_i = c$. If $X(i) = F$ and $X(i+1) = V$, let $a_i = d$. If $X(i) = V$ then $X(i+r) = F$, so that $a_{i+r}$ has already been defined; in this case let $a_i = a_{i+r}^{-1}$. One checks that the same form~$b$ as in Case~1 above defines a polarization on~$M_{X,a}$ and that the associated HW-triple is $(Q,c\cdot \Phi,d\cdot \Psi)$.

In Case~2 the calculation is similar but we need to introduce some more notation. Let $I = (\mZ/q\mZ) \coprod (\mZ/q\mZ)$ and let $i\mapsto \bar{i}$ be the involution of~$I$ that exchanges the two copies of $\mZ/q\mZ$. Define a sequence $(a_i)_{i\in I}$ by starting with the rule that $a_i = c$ if $X(i) = X(i+1) = F$ and $a_i = d$ if $X(i) = F$ and $X(i+1) = V$. If $X(i) = V$ then $X(\bar{i}) = F$ so that $a_{\bar{i}}$ has already been defined, and we take $a_i := (a_{\bar{i}})^{-1}$. Again one checks by direct calculation that the same form~$b$ as in Case~2 above defines a polarization on~$M_{X,a} \oplus M_{\check{X},a}$ and that the associated HW-triple is $(Q,c\cdot \Phi,d\cdot \Psi)$. This completes the proof.
\end{proof}

\section{Complete intersection curves}
\label{sec:ComplInters}

In this section we describe a method to calculate the equivalence class of the Hasse--Witt triple associated with a complete intersection curve, assuming the characteristic of our field does not divide the degrees of the equations. When combined with the results of the previous section, this gives a way to compute the Ekedahl--Oort type of such curves.

All curves we consider are assumed to be complete and non-singular.

\subsection{}\label{ssec:DMofJ[p]}
Let $k$ be an algebraically closed field of characteristic $p>0$ with Frobenius automorphism $\sigma \colon k \to k$. Let $C/k$ be a curve of genus $g\geq 1$, and let $J$ be the Jacobian of~$C$. Write $M = H^1_\dR(C/k)$. As is well-known (see \cite{OdaDR}, Section~5) we have a $\sigma$-linear Frobenius $F \colon M\to M$ and a $\sigma^{-1}$-linear Verschiebung $V \colon M \to M$ such that $(M,F,V)$ is the (contravariant) Dieudonn\'e module of the $p$-kernel group scheme~$J[p]$. The principal polarization of~$J$ gives rise to an alternating bilinear form $b \colon M \times M \to k$ satisfying~\eqref{eq:bFxy}, so that $(M,F,V,b)$ is a polarized Dieudonn\'e module as in~\ref{ssec:DM1}.

For the discussion that follows, it is important to recall that we have natural isomorphisms $\Ker(F) \cong H^0(C,\Omega^1_C)$ and $M/\Ker(F) \cong H^1(C,\cO_C)$. The HW-triple associated with~$M$, as in~\ref{ssec:DMtoHW} is of the form $\cQ_C = \bigl(H^1(C,\cO_C),\Phi_C,\Psi_C\bigr)$, where the operator $\Phi_C$ is the classical Hasse--Witt operator on $H^1(C,\cO_C)$, induced by the absolute Frobenius on~$\cO_C$. The operator~$\Psi_C$ may be viewed as an injective $\sigma$-linear map $\Ker(\Phi_C) \to H^0(C,\Omega^1_C)$.

\subsection{}\label{ssec:CINotat}
Let $\mP = \mP^n$ for some $n\geq 2$, and let $C \subset \mP$ be a complete intersection curve, say $C = \cZ(f_1,\ldots,f_{n-1})$ with $f_i$ homogeneous of degree $d_i \geq 2$. Let $d = d_1 + \cdots + d_{n-1}$. If $I$ is a subset of $\{1,\ldots,n-1\}$ we write $f_I = \prod_{i\in I}\, f_i$ and $d_I = \sum_{i\in I}\, d_i$. If $n=2$ we assume $d = d_1 \geq 3$, so that in all cases $d-(n+1) \geq 0$. We also assume that $p \nmid d_i$ for all~$i$.

The following convention will be in force:
\begin{equation}\label{eq:convention}
\text{If a finite set $V \subset \mZ$ is written as $V = \{v_1,\ldots,v_m\}$, it is assumed that $v_1 < v_2 < \cdots < v_m$.}
\end{equation}
(In particular, $m = |V|$.)

We will compute cohomology groups using \v{C}ech cohomology with respect to the standard open cover $\{U_0,\ldots,U_n\}$ of~$\mP$. For $J \subset \{0,\ldots,n\}$, let $U_J = \cap_{j\in J}\, U_j$. If $\cF$ is an abelian sheaf on~$\mP$ and $m\geq 0$, let $C^m(\cF) = \oplus_{|J| = m+1}\, \cF(U_J)$, the direct sum running over all subsets $J \subset \{0,\ldots,n\}$ of $m+1$ elements. The \v{C}ech differentials $C^m(\cF) \to C^{m+1}(\cF)$ are defined as in \cite[\href{https://stacks.math.columbia.edu/tag/01FH}{Tag 01FH}]{stacks-project}. (We use the ``ordered'' version of the \v{C}ech complex.)

Let $\sS(\mP) = \oplus_{r\in \mZ}\, H^0\bigl(\mP,\cO_\mP(r)\bigr) = k[X_0,\ldots,X_n]$ and
\[
\sS(C) = \bigoplus_{r\in \mZ}\, H^0\bigl(C,\cO_C(r)\bigr) = \sS(\mP)/I_C\, ,
\]
where $I_C = (f_1,\ldots,f_{n-1})$. Also define $\sT(\mP) = \oplus_{r\in \mZ}\, H^n\bigl(\mP,\cO_\mP(r)\bigr)$, which is a graded $\sS(\mP)$-module. In what follows we identify~$\sT(\mP)$ with $k[X_0^{\pm 1},\ldots,X_n^{\pm 1}]/L$, where $L \subset k[X_0^{\pm 1},\ldots,X_n^{\pm 1}]$ is the subspace spanned by all monomials $X^e = X_0^{e_0}\cdots X_n^{e_n}$ for which at least one exponent~$e_i$ is non-negative. (A homogeneous form $A \in k[X_0^\pm,\ldots,X_n^\pm]$ of degree~$r$ is sent to the class $[A] \in H^n\big(\mP,\cO_\mP(r)\bigr)$ represented by~$A$ as an element of $C^n\bigl(\cO_\mP(r)\bigr) = \cO_\mP(r)\bigl(U_{01\cdots n}\bigr)$.) Let
\[
\sT(C) = (0:I_C) = \bigl\{t\in \sT(\mP) \bigm| \text{$f_i\cdot t = 0$ for all~$i$} \bigr\}\, ,
\]
which is a graded $\sS(C)$-module.

\subsection{}\label{ssec:Kbullet}
Let $\cE = \oplus_{i=1}^{n-1}\, \cO_\mP(-d_i)$. Consider the Koszul complex
\[
K_\bullet = K_\bullet(f_1,\ldots,f_{n-1}):\qquad \wedge^{n-1} \cE \tto \wedge^{n-2} \cE \tto \cdots \tto \cE \tto \cO_\mP\, ,
\]
with $\wedge^m \cE$ placed in degree~$-m$. We have $\wedge^m \cE \cong \oplus_{|I|=m}\, \cO_\mP(-d_I)$, the sum running over all subsets $I \subset \{1,\ldots,n-1\}$ of $m$ elements. If $I = \{i_1,\ldots,i_m\}$ then on the corresponding direct summand $\cO_\mP(-d_I) \subset \wedge^m \cE$ the Koszul differential $\partial_\Kos \colon \wedge^m \cE \to \wedge^{m-1} \cE$ is given by $s \mapsto \sum_{\nu = 1}^m\, (-1)^{\nu +1}\, f_{i_\nu} \cdot s$. As a particular case of this, note that $\wedge^{n-1} \cE = \det(\cE) = \cO_\mP(-d)$ and $\wedge^{n-2}\cE \cong \oplus_{i=1}^{n-1}\, \cO_\mP(-d+d_i)$; under these identifications $\partial_\Kos \colon \wedge^{n-1} \cE \to \wedge^{n-2} \cE$ is given by $s\mapsto (f_1\cdot s,-f_2\cdot s,\ldots,(-1)^nf_{n-1}\cdot s)$.

The Koszul complex is a resolution of~$\cO_C$. As the sheaves $K_m(r)$ have zero cohomology in degrees $\neq 0, n$, we obtain isomorphisms
\begin{equation}\label{eq:H1COCm}
H^1\bigl(C,\cO_C(r)\bigr) \isomworra H^1(\mP,K_\bullet(r)\bigr) \isomarrow \bigl\{t\in H^n\bigl(\mP,\cO_\mP(r-d)\bigr) \bigr| \text{$f_i\cdot t = 0$ for all~$i$} \bigr\} = \sT(C)_{r-d}\, .
\end{equation}
Let $v_r \colon H^1\bigl(C,\cO_C(r)\bigr) \isomarrow \sT(C)_{r-d}$ be the isomorphism thus obtained. For later use, note that if $g \in \sS(C)_m$ and $\xi \in H^1\bigl(C,\cO_C(r)\bigr)$ then we have $g\cdot v_r(\xi) = v_{r+m}(g\cdot \xi)$.

\subsection{}\label{ssec:shuffsigns}
For the calculations that follow, we need to introduce some signs. The setting for this is that we consider a finite set of integers~$I$ and an integer $i \notin I$. Write $I\cup \{i\} = \{i_1,\ldots,i_m\}$ (with convention~\eqref{eq:convention}), and let $a$ be the index such that $i = i_a$. Then we let
\[
\epsilon(i,I) = (-1)^{a+1}\, .
\]

\subsection{}\label{ssec:KCocyc}
A class in $H^1(\mP,K_\bullet)$ can be represented by a cocycle $\alpha = (\alpha_{n-1},\alpha_{n-2},\ldots,\alpha_0)$ with
\[
\alpha_m \in C^{m+1}(\wedge^m \cE) = \bigoplus_{\substack{J \subset \{0,\ldots,n\}\\ |J| = m+2}}\; \bigoplus_{{\substack{I \subset \{1,\ldots,n-1\}\\ |I| = m}}} \cO_\mP(-d_I)\bigl(U_J\bigr)\, .
\]
Let $\alpha_{m,I,J}$ denote the component of~$\alpha_m$ in $\cO_\mP(-d_I)\bigl(U_J\bigr)$. The assumption that $\alpha$ is a cocycle means that for all subsets $I = \{i_1,\ldots,i_m\} \subset \{1,\ldots,n-1\}$ and $J = \{j_1,\ldots,j_{m+3}\} \subset \{0,\ldots,n\}$ we have
\begin{equation}\label{eq:alphacocyc}
\sum_{\mu=1}^{m+3}\; (-1)^{\mu+1} \alpha_{m,I,J\setminus\{j_\mu\}} = (-1)^{m+1} \sum_{i\notin I}\; \epsilon(i,I) \cdot f_i \cdot \alpha_{m+1,I \cup \{i\},J}\, .
\end{equation}
Under the left isomorphism in~\eqref{eq:H1COCm} (taking $r=0$) the class $[\alpha] \in H^1(\mP,K_\bullet)$ is sent to the class in $H^1(C,\cO_C)$ represented by the cocycle $[\bar\alpha_0]$, where the bar denotes the image under $\pi\colon \cO_\mP \to \cO_C$ (i.e., taking the reduction modulo $(f_1,\ldots,f_{n-1})$). The image of~$[\alpha]$ under the right isomorphism in~\eqref{eq:H1COCm} is the class in $H^n\bigl(\mP,\cO_\mP(-d)\bigr)$ represented by the component~$\alpha_{n-1}$.

\subsection{}\label{ssec:HWCI}
As in \ref{ssec:DMofJ[p]}, let $\cQ_C = \bigl(H^1(C,\cO_C),\Phi_C,\Psi_C\bigr)$ be the Hasse--Witt triple associated with~$C$. The goal of this section is to calculate the equivalence class of this triple.

Define
\[
Q = \sT(C)_{-d}\, ,
\]
and let
\[
v = v_0 \colon H^1(C,\cO_C) \isomarrow Q\, .
\]
be the isomorphism obtained in~\ref{ssec:Kbullet}. Recall that, by definition, $\sT(C)_{-d}$ is a subspace of~$\sT(\mP)_{-d}$, and that the latter is the degree~$-d$ component of $k[X_0^\pm,\ldots,X_n^\pm]/L$. Hence, elements of $H^1(C,\cO_C)$ can be represented as classes~$[A]$ with $A \in k[X_0^\pm,\ldots,X_n^\pm]$ a homogeneous form of degree~$-d$.

\begin{proposition}\label{prop:PhiFormula}
Under the isomorphism~$v$, the operator~$\Phi_C$ on $H^1(C,\cO_C)$ becomes the semi-linear endomorphism~$\Phi$ of the space~$Q$ that is given by
\[
\Phi[A] = \bigl[(f_1\cdots f_{n-1})^{p-1} \cdot A^p\bigr]\, .
\]
\end{proposition}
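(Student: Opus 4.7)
The plan is to unwind the two isomorphisms in~\eqref{eq:H1COCm} at the level of Čech cocycles for the double complex $C^\bullet(K_\bullet)$, apply the absolute Frobenius, and produce an explicit Koszul cocycle on~$\mP$ whose top component realizes the claimed formula.

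First, given a class $[A] \in Q$, choose a Čech lift $\alpha = (\alpha_{n-1},\alpha_{n-2},\ldots,\alpha_0)$ of~$A$ with $\alpha_m \in C^{m+1}(\wedge^m\cE)$ satisfying~\eqref{eq:alphacocyc}, and such that the sole component $\alpha_{n-1,\{1,\ldots,n-1\},\{0,\ldots,n\}}$ equals~$A$. Such a lift exists by the description of the isomorphism~\eqref{eq:H1COCm} in terms of the Koszul spectral sequence (the iterated lifting is possible precisely because $f_i\cdot A = 0$ in~$\sT(\mP)$ for all~$i$). By the description recalled in~\ref{ssec:KCocyc}, $v^{-1}[A]$ is then represented by $\bar\alpha_0 \in C^1(\cO_C)$, where the bar denotes reduction modulo~$I_C$.

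Next, since $\Phi_C$ is induced by the absolute Frobenius $\cO_C \to \cO_C$ and $\pi\colon\cO_\mP\to \cO_C$ is a ring map, the class $\Phi_C[\bar\alpha_0] \in H^1(C,\cO_C)$ is represented by the cocycle $\bar\alpha_0^{\,p} = \overline{\alpha_0^{\,p}}$. The heart of the proof is to exhibit a Koszul cocycle $\beta$ on~$\mP$ lifting~$\overline{\alpha_0^{\,p}}$ whose top component equals $(f_1\cdots f_{n-1})^{p-1}\cdot A^p$. The candidate is
\[
\beta_{m,I,J} \; := \; f_I^{\,p-1}\cdot \alpha_{m,I,J}^{\,p}\, ,
\]
which is a section of $\cO_\mP(-d_I)$ on~$U_J$ (the degrees match because $f_I$ has degree~$d_I$) and which specializes to $\beta_0 = \alpha_0^{\,p}$ and $\beta_{n-1} = (f_1\cdots f_{n-1})^{p-1}A^p$.

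The verification that $\beta$ satisfies~\eqref{eq:alphacocyc} is the one substantive step. Raise the identity~\eqref{eq:alphacocyc} for~$\alpha$ to the $p$-th power: additivity of Frobenius in characteristic~$p$ turns both sides into sums of $p$-th powers, and the integer signs $(-1)^{\mu+1}$, $(-1)^{m+1}$, and $\epsilon(i,I) = \pm 1$ are all preserved under $x\mapsto x^p$ (whether $p$ is odd or $p=2$). Multiplying through by $f_I^{\,p-1}$ and using the identity $f_I^{\,p-1}\cdot f_i^{\,p} = f_i \cdot f_{I\cup\{i\}}^{\,p-1}$ converts the resulting relation into exactly the cocycle relation~\eqref{eq:alphacocyc} applied to~$\beta$. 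Thus $\beta$ is a Koszul cocycle on~$\mP$ lifting~$\overline{\alpha_0^{\,p}}$, and applying the right isomorphism in~\eqref{eq:H1COCm} yields $v(\Phi_C[A]) = [\beta_{n-1}] = \bigl[(f_1\cdots f_{n-1})^{p-1}\cdot A^p\bigr]$, as required.

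The main obstacle is the bookkeeping in the cocycle verification: one must track the signs $\epsilon(i,I)$, the convention~\eqref{eq:convention}, and the exponent identity $f_I^{p-1}f_i^p = f_i f_{I\cup\{i\}}^{p-1}$ carefully. Everything else (existence of the lift~$\alpha$, the fact that $\Phi_C$ is induced by $x\mapsto x^p$ on~$\cO_C$, degree matching, etc.) is either standard or follows directly from results recalled in~\ref{ssec:DMofJ[p]}--\ref{ssec:KCocyc}.
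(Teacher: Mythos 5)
Your proof is correct and is essentially the same as the paper's: the key formula $\beta_{m,I,J}=f_I^{\,p-1}\alpha_{m,I,J}^{\,p}$ is exactly the paper's sheaf-level endomorphism $\phi_m(s)=f_I^{\,p-1}s^p$ of $\wedge^m\cE$ applied to a \v{C}ech representative, and your cocycle verification (additivity of $x\mapsto x^p$ plus the identity $f_I^{\,p-1}f_i^{\,p}=f_i\,f_{I\cup\{i\}}^{\,p-1}$) is precisely the check that $\phi_\bullet$ is a chain map compatible with $g\mapsto g^p$ on $\cO_C$. The paper states this more compactly by packaging $\phi_\bullet$ as an endomorphism of the Koszul complex and invoking functoriality of the isomorphisms in~\eqref{eq:H1COCm}, whereas you carry out the computation explicitly on cocycles; the content is the same.
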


\begin{proof}
Define an endomorphism~$\phi_m$ of $\wedge^m \cE = \oplus_{|I|=m}\; \cO_\mP(-d_I)$ as a sheaf of abelian groups by the rule that $\phi_m(s) = f_I^{p-1} \cdot s^p$ for $s$ a local section of~$\cO_\mP(-d_I)$. By direct calculation we see that the maps~$\phi_m$ define an endomorphism~$\phi_\bullet$ of the Koszul complex~$K_\bullet$ such that the diagram
\[
\begin{tikzcd}
K_\bullet \ar[r,"\pi"] \ar[d,"\phi_\bullet"]& \cO_C \ar[d,"g\mapsto g^p"]\\
K_\bullet \ar[r,"\pi"] & \cO_C
\end{tikzcd}
\]
(with $\pi \colon K_\bullet \to \cO_C$ the natural map) is commutative. It follows that under the right isomorphism in~\eqref{eq:H1COCm}, $\Phi_C$ corresponds to the endomorphism of $H^1(\mP,K_\bullet)$ induced by~$\phi_\bullet$. Under the left isomorphism in~\eqref{eq:H1COCm} this corresponds to the endomorphism of $\sT(C)_{-d}$ induced by~$\phi_{n-1}$. This gives the assertion.
\end{proof}

\subsection{}\label{ssec:H0Omega1}
We have exact sequences
\begin{equation}\label{eq:EOmOm}
0 \tto \cE \otimes \cO_C(r) \tto \Omega^1_\mP \otimes \cO_C(r) \tto \Omega^1_C(r) \tto 0
\end{equation}
(for $r\in \mZ$) and $0 \tto (\Omega^1_\mP \otimes \cO_C) \tto \cO_C(-1)^{n+1} \tto \cO_C \tto 0$.

The complex $\cE \otimes K_\bullet(r)$ is a resolution of $\cE \otimes \cO_C(r)$. As the terms $\cE \otimes K_m(r)$ have zero cohomology in degrees $\neq 0, n$, we obtain that
\begin{equation}\label{eq:H1EtensorK}
\Ker\Bigl(H^n(\mP,\cE(r-d)) \to H^n\bigl(\mP,\cE \otimes \bigl(\wedge^{n-2}\cE\bigr)(r)\bigr) \Bigr) \isomworra H^1\bigl(\mP,\cE \otimes K_\bullet(r)\bigr) \isomarrow H^1\bigl(C,\cE\otimes \cO_C(r)\bigr)\, .
\end{equation}
The term on the left is $\oplus_{\ell=1}^{n-1} \sT(C)_{r-d-d_\ell}$.

\begin{lemma}
For $r \leq 0$ the boundary map associated with~\eqref{eq:EOmOm} combined with the isomorphisms~\eqref{eq:H1EtensorK} gives an isomorphism
\begin{equation}\label{eq:H0Omega1}
w_r \colon H^0\bigl(C,\Omega^1_C(r)\bigr) \isomarrow \Bigl\{\xi = \bigl(\xi_\ell\bigr)_\ell \in \bigoplus_{\ell=1}^{n-1} \sT(C)_{r-d-d_\ell} \Bigm|
\sum_{\ell=1}^{n-1}\, \tfrac{\partial f_\ell}{\partial X_j}\cdot \xi_\ell = 0 \quad \text{for all $j = 0,\ldots,n$} \Bigr\}\, .
\end{equation}
\end{lemma}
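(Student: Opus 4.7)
The plan is to analyze the long exact cohomology sequence of the conormal sequence~\eqref{eq:EOmOm}, combined with the Euler sequence restricted to~$C$,
\[
0 \to \Omega^1_\mP \otimes \cO_C(r) \to \cO_C(r-1)^{n+1} \to \cO_C(r) \to 0\, ,
\]
to establish injectivity of the connecting map and to identify its image explicitly.

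First, since $\cO_C(1)$ has positive degree on the irreducible smooth curve~$C$, we have $H^0(C, \cO_C(r-1)) = 0$ for $r \leq 0$, and the Euler sequence then yields $H^0(C, \Omega^1_\mP \otimes \cO_C(r)) = 0$. The connecting map $\delta \colon H^0(C, \Omega^1_C(r)) \to H^1(C, \cE \otimes \cO_C(r))$ is therefore injective, with image equal to $\Ker(\alpha)$, where $\alpha \colon H^1(\cE \otimes \cO_C(r)) \to H^1(\Omega^1_\mP \otimes \cO_C(r))$ is induced by~\eqref{eq:EOmOm}.

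Next, I would compose $\alpha$ with the map $\beta \colon H^1(\Omega^1_\mP \otimes \cO_C(r)) \to H^1(\cO_C(r-1))^{n+1}$ from the Euler sequence. The composition of sheaf maps $\cE \otimes \cO_C(r) \to \Omega^1_\mP \otimes \cO_C(r) \hookrightarrow \cO_C(r-1)^{n+1}$ sends the generator of the $\ell$-th summand of $\cE$ to $(\partial f_\ell/\partial X_j)_{j=0,\ldots,n}$; indeed, Euler's identity $\sum_j X_j\, (\partial f_\ell/\partial X_j) = d_\ell f_\ell$ vanishes on~$C$, so the tuple does lie in the subsheaf $\Omega^1_\mP \otimes \cO_C(r)$. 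Using the identifications of~\eqref{eq:H1EtensorK} and~\eqref{eq:H1COCm}, the composite $\beta \circ \alpha$ becomes the $k$-linear map $(\xi_\ell)_\ell \mapsto \bigl(\sum_\ell (\partial f_\ell/\partial X_j) \cdot \xi_\ell\bigr)_j$ on $\oplus_\ell \sT(C)_{r-d-d_\ell}$, whose kernel is exactly the right-hand side of~\eqref{eq:H0Omega1}.

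Finally, I need $\Ker(\alpha) = \Ker(\beta \circ \alpha)$, equivalently $\Image(\alpha) \cap \Ker(\beta) = 0$. For $r < 0$ this is automatic: $H^0(\cO_C(r)) = 0$ forces $\beta$ to be injective. The main obstacle is the case $r = 0$, where $\Ker(\beta)$ is one-dimensional, spanned by the image of $1 \in H^0(\cO_C) = k$ under the Euler connecting map, namely the restriction to~$C$ of $c_1(\cO_\mP(1))$. By exactness of~\eqref{eq:EOmOm}, any class in $\Image(\alpha)$ maps to zero in $H^1(\Omega^1_C)$, so it suffices to verify that $c_1(\cO_C(1))$ is nonzero in $H^1(\Omega^1_C) \cong k$. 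Via Serre duality this class pairs to $\deg_C \cO_C(1) = d_1\cdots d_{n-1}$ by Bezout, which is nonzero in~$k$ thanks to the hypothesis $p \nmid d_i$. The map~$w_r$, defined as $\delta$ composed with the identification~\eqref{eq:H1EtensorK}, is then the claimed isomorphism.
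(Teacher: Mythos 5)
Your proposal is correct and follows essentially the same route as the paper: reduce to showing that the connecting map for~\eqref{eq:EOmOm} is injective (using $H^0\bigl(C,\Omega^1_\mP \otimes \cO_C(r)\bigr) = 0$ for $r \leq 0$), compose further with the map from the restricted Euler sequence, identify the composite via the \v Cech isomorphisms as $(\xi_\ell) \mapsto \bigl(\sum_\ell \partial f_\ell/\partial X_j \cdot \xi_\ell\bigr)_j$, and verify that nothing new is killed by the second map, with the case $r=0$ coming down to the nonvanishing of $d_1\cdots d_{n-1}$ in~$k$. The only (cosmetic) difference is that where the paper simply states that $1 \in H^0(C,\cO_C)$ maps to $\prod d_i$ in $H^1(C,\Omega^1_C) \cong k$, you supply the standard justification via $c_1(\cO_C(1))$, the trace pairing, and Bezout.
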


\begin{proof}
For $r \leq 0$ we have $H^0\bigl(C,\Omega^1_\mP \otimes \cO_C(r)\bigr) = 0$, and hence
\[
H^0\bigl(C,\Omega^1_C(r)\bigr) \isomarrow \Ker\Bigl( H^1\bigl(C,\cE \otimes \cO_C(r)\bigr) \tto H^1\bigl(C,\Omega^1_\mP \otimes \cO_C(r)\bigr)\Bigr)\, .
\]
We have a diagram with exact row and column
\[
\begin{tikzcd}
  & H^0\bigl(C,\cO_C(r)\bigr) \ar[d] \ar[dr,"h"] & \\
H^1\bigl(C,\cE \otimes \cO_C(r)\bigr) \ar[r] & H^1\bigl(C,\Omega^1_\mP \otimes \cO_C(r)\bigr) \ar[r] \ar[d] & H^1\bigl(C,\Omega^1_C(r)\bigr) \\
 & H^1\bigl(C,\cO_C(r-1)\bigr)^{n+1} &
\end{tikzcd}
\]
We claim that $h$ is injective. If $r<0$ we have $H^0\bigl(C,\cO_C(r)\bigr) = 0$, so injectivity of~$h$ is trivial. For $r=0$ the map $h \colon H^0(C,\cO_C) \to H^1(C,\Omega^1_C) \cong k$ sends~$1$ to $\prod\, d_i$, which is non-zero due to our assumption that $p\nmid d_i$ for all~$i$.

The injectivity of~$h$ implies that
\[
H^0\bigl(C,\Omega^1_C(r)\bigr) \isomarrow \Ker\Bigl( H^1\bigl(C,\cE \otimes \cO_C(r)\bigr) \tto H^1\bigl(C,\cO_C(r-1)\bigr)^{n+1}\Bigr)\, .
\]
Via the isomorphisms in \eqref{eq:H1COCm} and \eqref{eq:H1EtensorK} this becomes
\[
H^0\bigl(C,\Omega^1_C(r)\bigr) \isomarrow \Ker\Bigl(\; \bigoplus_{\ell=1}^{n-1} \sT(C)_{r-d-d_\ell} \xrightarrow{~\delta~} \sT(C)_{r-1}^{n+1}\Bigr)\, ,
\]
where $\delta(y) = \bigl(\frac{\partial f_\ell}{\partial X_j} \cdot y\bigr)_{j=0,\ldots,n}$ for $y \in \sT(C)_{r-d-d_\ell}$. This gives the assertion.
\end{proof}

\subsection{}
Define
\begin{equation}\label{eq:QprimeDef}
Q^\prime = \Bigl\{\xi = \bigl(\xi_\ell\bigr)_\ell \in \bigoplus_{\ell=1}^{n-1} \sT(C)_{-d-d_\ell} \Bigm|
\sum_{\ell=1}^{n-1}\, \tfrac{\partial f_\ell}{\partial X_j}\cdot \xi_\ell = 0 \quad \text{for all $j = 0,\ldots,n$} \Bigr\}\, ,
\end{equation}
and let
\[
w = w_0\colon H^0(C,\Omega^1_C) \isomarrow Q^\prime\, .
\]

\begin{proposition}\label{prop:PsiFormula}
Under the isomorphisms $v \colon H^1(C,\cO_C) \isomarrow Q$ and $w\colon H^0(C,\Omega^1_C) \isomarrow Q^\prime$, the map $\Psi_C \colon \Ker(\Phi_C) \to H^0(C,\Omega^1_C)$ becomes the map $Q \supset \Ker(\Phi) \to Q^\prime$ given by
\begin{equation}\label{eq:CIPsi}
[A] \mapsto \Bigl[\frac{(f_1\cdots f_{n-1})^{p-1}}{f_1} \cdot A^p\, ,\quad \frac{(f_1\cdots f_{n-1})^{p-1}}{f_2} \cdot  A^p\, ,\quad \ldots \quad ,\quad \frac{(f_1\cdots f_{n-1})^{p-1}}{f_{n-1}} \cdot  A^p  \Bigr]
\end{equation}
\end{proposition}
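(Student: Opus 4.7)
By the construction in Section~\ref{ssec:DMtoHW}, the map $\Psi_C$ has the following clean description: for $[A] \in \Ker(\Phi_C)$, pick any lift $\widetilde{[A]} \in H^1_\dR(C/k) = M$; then $F(\widetilde{[A]})$ lies in $\Ker(F) = H^0(C,\Omega^1_C) \subset M$, since its projection to $Q = M/\Ker(F) \cong H^1(C,\cO_C)$ is $\Phi_C[A] = 0$. Under Serre duality given by the polarization~$b$, this element is exactly $\Psi_C[A]$, and the choice of lift is immaterial because $F$ annihilates $\Ker(F)$. So the task reduces to computing the de Rham Frobenius in the \v{C}ech--Koszul framework of Sections \ref{ssec:Kbullet}--\ref{ssec:KCocyc}, and reading off the $H^0(C,\Omega^1_C)$-component via the identification~$w$ of~\eqref{eq:H0Omega1}.

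The plan is to mirror the proof of Proposition~\ref{prop:PhiFormula}. First, represent $[A]$ by a \v{C}ech--Koszul cocycle $\alpha = (\alpha_{n-1},\ldots,\alpha_0)$ with $\alpha_{n-1} = A$, so that $[\bar\alpha_0]$ represents the class in $H^1(C,\cO_C)$. Next, lift $[\bar\alpha_0]$ to a hypercocycle for the de Rham complex $\Omega^\bullet_C = [\cO_C \to \Omega^1_C]$; here $\Omega^1_C$ is resolved via the conormal sequence $0 \to \cE|_C \to \Omega^1_\mP|_C \to \Omega^1_C \to 0$ combined with the Koszul resolution, as in~\eqref{eq:H1EtensorK}. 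Then extend the chain map $\phi_\bullet$ of Proposition~\ref{prop:PhiFormula} (which acts as $s \mapsto f_I^{p-1} \cdot s^p$ on the $\cO_\mP(-d_I)$-summand of $\wedge^m \cE$) to a chain map on this larger bicomplex that is compatible with the de Rham differential~$d$. Applying this extended map and projecting via the boundary map of the conormal sequence yields an element of $\bigoplus_\ell \sT(C)_{-d-d_\ell}$, which I expect to match~\eqref{eq:CIPsi}.

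A consistency check guiding the formula is the characteristic-$p$ identity, with $\xi_\ell := (f_1\cdots f_{n-1})^{p-1}/f_\ell \cdot A^p$:
\[
d\bigl[(f_1\cdots f_{n-1})^{p-1} A^p\bigr] \;=\; -\sum_j \Bigl(\sum_\ell \tfrac{\partial f_\ell}{\partial X_j}\cdot \xi_\ell\Bigr)\, dX_j\, ,
\]
using $d(A^p) = p A^{p-1}\, dA = 0$ and $(p-1) \equiv -1 \pmod{p}$. Since $\partial/\partial X_j$ preserves the subspace~$L$ of monomials with at least one non-negative exponent, the hypothesis $\Phi_C[A] = 0$, i.e., $(f_1\cdots f_{n-1})^{p-1} A^p \in L$, forces $\sum_\ell \tfrac{\partial f_\ell}{\partial X_j}\cdot \xi_\ell = 0$ in $\sT(\mP)_{-d-1}$ for every~$j$, confirming $(\xi_\ell)_\ell \in Q'$. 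A parallel argument, noting that $L$ is stable under multiplication by polynomials and that $(f_iA)^p \in L$ whenever $f_iA \in L$, confirms $f_i \cdot \xi_\ell = 0$ in $\sT(\mP)_{-d-d_\ell+d_i}$ for all~$i,\ell$.

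The main technical obstacle is extending $\phi_\bullet$ to a chain map on the bicomplex computing the hypercohomology of $\Omega^\bullet_C$: the naive Frobenius on $\Omega^1$ vanishes in characteristic~$p$, so the $f_I^{p-1}$-twist must be used to build a compatible Frobenius-like map on $\Omega^1_\mP \otimes K_\bullet$ and $\cE\otimes K_\bullet$, with commutativity with both~$d$ and the Koszul differential verified and the signs of~\ref{ssec:shuffsigns} carefully tracked. Once this is in place, the formula~\eqref{eq:CIPsi} drops out from the identity above combined with~\eqref{eq:H1EtensorK}.
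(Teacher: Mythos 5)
Your plan correctly identifies the framework --- \v{C}ech--Koszul hypercohomology, the conormal sequence, lifting the Frobenius to the Koszul resolution --- and your consistency check showing that the right-hand side of~\eqref{eq:CIPsi} lies in~$Q'$ is valid (the paper makes the same remark right after stating the proposition). But the proposal stops at a plan, and its central step is misframed.

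You propose to ``extend $\phi_\bullet$ to a chain map on the larger bicomplex compatible with the de Rham differential $d$,'' and claim the formula then ``drops out.'' Two problems. First, you never produce the extension, and your guess that it uses ``the $f_I^{p-1}$-twist'' is already off: the de Rham Frobenius is $g\mapsto g^p$ in degree~$0$ and zero in degree~$1$, and after lifting $g\mapsto g^p$ to $\phi_\bullet$ on $K_\bullet$ one computes, on a local section $s$ of $\cO_\mP(-d_\ell)\subset\cE$, that $d\bigl(f_\ell^{p-1}s^p\bigr) = -f_\ell^{p-2}(df_\ell)\, s^p$; so whatever compatible map one builds on the $\Omega^1$-side must involve $f_\ell^{p-2}$, as in~\eqref{eq:CIPsi}, not $f_I^{p-1}$. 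Second, and more seriously, even with such an extension in hand the formula does not drop out: applying the chain map to a hypercocycle representing a lift of $[A]$ gives a hypercocycle whose $K_\bullet$-component is a coboundary (this is precisely what $\Phi[A]=0$ means), and one must then choose a cobounding element~$\beta$ (the relation~\eqref{eq:alpha=dbeta} in the paper), extract the glued $1$-form $-d\bar\beta_0$ as the representative of $\Psi[A]$ (using~\eqref{eq:casem=0}), identify its image under the boundary map of the conormal sequence as $-[\bar\beta_1]$, and finally exhibit an explicit \v{C}ech--Koszul cocycle~$\gamma$ in $H^1(\mP,\cE\otimes K_\bullet)$ whose degree-$0$ component reduces to $-\bar\beta_1$ and whose degree-$(n-1)$ component is the right-hand side of~\eqref{eq:CIPsi}, verifying the cocycle condition using both~\eqref{eq:alphacocyc} and~\eqref{eq:alpha=dbeta}. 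None of this is carried out. Your consistency check establishes only that the candidate lands in~$Q'$, which is necessary but far from sufficient for it to equal~$\Psi[A]$.
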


Note that the right hand side of \eqref{eq:CIPsi} indeed satisfies the required relations; this follows from the remark that
\[
(p-1) \cdot \sum_{\ell=1}^{n-1}\, \tfrac{\partial f_\ell}{\partial X_j}\cdot  \tfrac{(f_1\cdots f_{n-1})^{p-1}}{f_\ell} \cdot A^p = \tfrac{\partial}{\partial X_j} \Bigl((f_1\cdots f_{n-1})^{p-1} A^p \Bigr)
\]
together with the assumption that $\bigl[(f_1\cdots f_{n-1})^{p-1} \cdot A^p\bigr] = 0$ (which says that $[A] \in \Ker(\Phi)$).

\begin{proof}
The proof is based on a \v{C}ech calculation which is elementary but which is notationally a little involved. For the first step of the proof we calculate in the \v{C}ech--Koszul double complex whose term in bi-degree $(-m,r)$ is $C^r(\wedge^m \cE) = \oplus_{|I|=m} \oplus_{|J|=r+1}\; \cO_\mP(-d_I)\bigl(U_J\bigr)$. In the vertical direction we have the \v{C}ech differentials $\partial_\ver \colon C^r(\wedge^m \cE) \to C^{r+1}(\wedge^m \cE)$. In the horizontal direction, the differential $C^r(\wedge^m \cE) \to C^r(\wedge^{m-1} \cE)$ is $(-1)^r\cdot \partial_\Kos$. As in~\ref{ssec:KCocyc}, if $y$ is an element of $C^r(\wedge^m \cE)$ then we denote by $y_{I,J}$ its component in $\cO_\mP(-d_I)\bigl(U_J\bigr)$.

Consider a class $[A] \in \Ker(\Phi)$. Let $\alpha = (\alpha_{n-1},\alpha_{n-2},\ldots,\alpha_0)$ be a cocycle (as in section~\ref{ssec:KCocyc}) that represents the corresponding class in $H^1(\mP,K_\bullet)$. As remarked before, $[A] = [\alpha_{n-1}]$. The assumption that $\Phi[A] = 0$ implies that the cocycle $\bigl(\phi_{n-1}(\alpha_{n-1}),\ldots,\phi_0(\alpha_0)\bigr)$ is a coboundary. Hence there exists an element
\[
\beta = (\beta_{n-1},\ldots,\beta_0)\, ,\qquad \text{with}\quad \beta_m \in C^m(\wedge^m \cE) = \bigoplus_{|J|=m+1}\; \wedge^m\cE(U_J)
\]
such that $\partial_\tot(\beta) = \bigl(\phi_{n-1}(\alpha_{n-1}),\ldots,\phi_0(\alpha_0)\bigr)$. If we write this out we find that for all sets $I = \{i_1,\ldots,i_m\} \subset \{1,\ldots,n-1\}$ and $J = \{j_1,\ldots,j_{m+2}\} \subset \{0,\ldots,n\}$ we have
\begin{equation}\label{eq:alpha=dbeta}
f_I^{p-1} \cdot \alpha_{m,I,J}^p = \sum_{\mu=1}^{m+2}\; (-1)^{\mu+1}\cdot \beta_{m,I,J\setminus \{j_\mu\}} + (-1)^{m+1} \cdot \sum_{i \notin I}\; \epsilon(i,I) \cdot f_i \cdot \beta_{m+1,I\cup\{i\},J}\, .
\end{equation}
For $m=0$ this says that for all subsets $\{j_1,j_2\} \subset \{0,\ldots,n\}$ we have the relation
\begin{equation}\label{eq:casem=0}
\alpha_{0,\emptyset,\{j_1,j_2\}}^p = \bigl(\beta_{0,\emptyset,\{j_2\}} - \beta_{0,\emptyset,\{j_1\}}\bigr) - \sum_{i=1}^{n-1}\; f_i \cdot \beta_{1,\{i\},\{j_1,j_2\}}\, .
\end{equation}
(Note that $\alpha_0 = (\alpha_{0,\emptyset,J})_{|J|=2}$ with $\alpha_{0,\emptyset,J} \in \cO_\mP(U_J)$. Similarly,  $\beta_0 = (\beta_{0,\emptyset,J})_{|J|=1}$ with $\beta_{0,\emptyset,\{j\}} \in \cO_\mP(U_j)$, and $\beta_1 = (\beta_{1,I,J})_{|I|=1, |J|=2}$, with $\beta_{1,\{i\},J} \in \cO_\mP(-d_i)\bigl(U_J\bigr)$.) It follows from \eqref{eq:casem=0} that the $1$-forms $-\dif \bar\beta_{0,\emptyset,\{j\}}$ on $C\cap U_j$ agree on overlaps and therefore give a global $1$-form on~$C$. The $1$-form thus obtained is $\Psi[A]$.

Our aim is to calculate the image of $\Psi[A]$ under the isomorphism~\eqref{eq:H0Omega1}. First we calculate the corresponding element of $\Ker\bigl(H^1(C,\cE \otimes \cO_C) \to H^1(C,\cO_C(-1))^{n+1} \bigr)$, which means we have to compute the image of~$\Psi[A]$ under the boundary map associated with the short exact sequence~\eqref{eq:EOmOm}. Differentiating \eqref{eq:casem=0} and then restricting to~$C$ gives
\[
\bigl(-\dif \beta_{0,\emptyset,\{j_2\}}\bigr) - \bigl(-\dif \beta_{0,\emptyset,\{j_1\}}\bigr) = \sum_{i=1}^{n-1}\; -\beta_{1,\{i\},\{j_1,j_2\}} \cdot \dif f_i\, ,
\]
as sections of $\Omega^1_\mP \otimes \cO_C$. It follows that $\Psi[A]$ is sent to the class in $H^1(C,\cE \otimes \cO_C)$ that is represented by the cocycle $-\bar\beta_1 = (-\bar\beta_{1,I,J})_{|I|=1, |J|=2}$.

Next we will write down a cocycle~$\gamma$ that represents the class in $H^1(\mP,\cE\otimes K_\bullet)$ that under the right isomorphism in~\eqref{eq:H1EtensorK} maps to the class $-[\bar\beta_1]$. Such a cocycle is a tuple $\gamma = (\gamma_{n-1},\ldots,\gamma_0)$ with
\[
\gamma_m \in C^{m+1}\bigl(\cE \otimes \wedge^m \cE\bigr) = \bigoplus_{\ell=1}^{n-1}\; \bigoplus_{\substack{I \subset \{1,\ldots,n-1\\|I|=m}}\; \bigoplus_{\substack{J \subset \{0,\ldots,n\\|J|=m+2}}\; \cO_\mP(-d_\ell-d_I)\bigl(U_J\bigr)\, .
\]
We denote by $\gamma_{m,\ell,I,J}$ the component of~$\gamma$ in $\cO_\mP(-d_\ell-d_I)\bigl(U_J\bigr)$.

We are done if we can show that the tuple~$\gamma$ with
\[
\gamma_{m,\ell,I,J} =
\begin{cases}
(-1)^{m+1} \cdot \epsilon(\ell,I) \cdot \beta_{m+1,I\cup\{l\},J} & \text{if $\ell \notin I$} \\
f_\ell^{p-2} \cdot \Bigl(\prod_{\nu \in I\setminus\{\ell\}}\, f_\nu^{p-1}\Bigr) \cdot \alpha_{m,I,J}^p & \text{if $\ell \in I$}
\end{cases}
\]
is a cocycle. Indeed, it is clear that $\gamma_0$ reduces to $-\bar\beta_1$ modulo $(f_1,\ldots,f_{n-1})$, so that $[\gamma] \in H^1(\mP,\cE\otimes K_\bullet)$ maps to $-[\bar\beta_1]$, which is the image of $\Psi[A]$ in $H^1(C,\cE \otimes \cO_C)$. On the other hand, the image of $[\gamma]$ under the left isomorphism in~~\eqref{eq:H1EtensorK} (taking $r=0$) is the class in $H^n\bigl(\mP,\cE(-d)\bigr)$ represented by the cocycle~$\gamma_{n-1}$, and this gives precisely the result as expressed in~\eqref{eq:CIPsi}.

Choose $\ell \in \{1,\ldots,n-1\}$, a subset $I \subset \{1,\ldots,n-1\}$ with $|I| = m$, and a subset $J = \{j_1,\ldots,j_{m+3}\} \subset \{0,\ldots,n\}$, and consider the component of $\partial_\tot(\gamma)$ in $\cO_\mP(-d_\ell-d_I)\bigl(U_J\bigr)$. To show that $\gamma$ is a cocycle we need to verify that
\[
\sum_{\mu=1}^{m+3}\; (-1)^{\mu+1} \cdot \gamma_{m,\ell,I,J\setminus\{j_\mu\}} \overset{?}{=} (-1)^{m+1} \cdot \sum_{i\notin I}\; \epsilon(i,I)\cdot f_i \cdot \gamma_{m+1,\ell,I\cup\{i\},J}\, .
\]
If $\ell \notin I$ this follows, after some straightforward calculation, from \eqref{eq:alpha=dbeta}; if $\ell \in I$ this follows from \eqref{eq:alphacocyc}.
\end{proof}

\subsection{}\label{ssec:theta}
The final ingredient that we need to describe is a duality between the spaces $Q$ and~$Q^\prime$. Define
\[
\sU = \Bigl\{\xi = \bigl(\xi_\ell\bigr)_\ell \in \bigoplus_{\ell=1}^{n-1} \sT(C)_{n+1-2d-d_\ell} \Bigm|
\sum_{\ell=1}^{n-1}\, \tfrac{\partial f_\ell}{\partial X_j}\cdot \xi_\ell = 0 \quad \text{for all $j = 0,\ldots,n$} \Bigr\}\, .
\]
Choose an isomorphism $\eta \colon \Omega^1_C \isomarrow \cO_C(d-n-1)$ (which is unique up to a constant in~$k^*$). We have isomorphisms $\eta^{-1} \otimes \id \colon \cO_C \isomarrow \Omega^1_C(n+1-d)$ and
\[
H^0(C,\cO_C) \maprightou{\sim}{H^0(\eta^{-1} \otimes \id)} H^0\bigl(C,\Omega^1_C(n+1-d)\bigr) \maprightou{\sim}{w_{n+1-d}} \sU\, .
\]
Let $\sansu \in \sU$ be the image of~$1$, so that $\sU = k\cdot \sansu$. The isomorphism
\[
\sS(C)_{d-n-1} = H^0\bigl(C,\cO_C(d-n-1)\bigr) \maprightou{\sim}{H^0(\eta^{-1})} H^0(C,\Omega^1_C) \maprightou{\sim}{\ w\ } Q^\prime
\]
is then given by $g \mapsto g\cdot \sansu$. Further, we have the isomorphism
\begin{equation}\label{eq:H1Om1}
H^1(C,\Omega^1_C) \maprightou{\sim}{H^1(\eta)} H^1\bigl(C,\cO_C(d-n-1)\bigr) \maprightou{\sim}{v_{d-n-1}} \sT(C)_{-n-1} = k\cdot \bigl[X^{-\mathbf{1}}\bigr]\, ,
\end{equation}
where by $X^{-\mathbf{1}}$ we mean the monomial $X_0^{-1} X_1^{-1} \cdots X_n^{-1}$.

\begin{proposition}
Define a pairing $\langle\ ,\ \rangle \colon Q \times Q^\prime \to \sT(C)_{-n-1}$ by the rule $\bigl\langle [A],g\cdot \sansu\bigr\rangle = g\cdot [A]$, for $g \in \sS(C)_{d-n-1}$. Then the diagram
\[
\begin{tikzcd}
H^1(C,\cO_C) \times H^0(C,\Omega^1_C) \arrow{d}{\wr}[swap]{{(v,w)}} \arrow{r}{\ \cup\ } & H^1(C,\Omega^1_C) \ar{d}{\eqref{eq:H1Om1}}[swap]{\wr} \\
Q \times Q^\prime \arrow{r}{\ \langle\ ,\ \rangle\ } & \sT(C)_{-n-1}
\end{tikzcd}
\]
is commutative.
\end{proposition}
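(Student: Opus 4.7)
The plan is to track everything through \v{C}ech cochains and reduce the commutativity to two elementary compatibilities already present in the paper: the compatibility of $v_r$ with multiplication by homogeneous elements of~$\sS(C)$ (noted at the end of~\ref{ssec:Kbullet}), and the naturality of the cup product with respect to the sheaf isomorphism $\eta \colon \Omega^1_C \isomarrow \cO_C(d-n-1)$.

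First I would unpack the bottom row. Given $\omega \in H^0(C,\Omega^1_C)$ with $w(\omega) = g\cdot \sansu \in Q^\prime$, the factorisation $\sS(C)_{d-n-1} \to H^0(C,\Omega^1_C) \to Q^\prime$ recorded in~\ref{ssec:theta} (whose composite is $g\mapsto g\cdot \sansu$) shows that $\omega = H^0(\eta^{-1})(g)$; equivalently, $H^0(\eta)(\omega) = g$ as a section of~$\cO_C(d-n-1)$. Since a global section acts on \v{C}ech cochains by pointwise multiplication, the cup product $[\xi]\cup \omega \in H^1(C,\Omega^1_C)$ is represented by the cochain $\xi\cdot \omega$, and the naturality of the cup product with respect to the sheaf morphism~$\eta$ gives
\[
H^1(\eta)\bigl([\xi]\cup\omega\bigr) = [\xi] \cup H^0(\eta)(\omega) = g\cdot [\xi] \in H^1\bigl(C,\cO_C(d-n-1)\bigr)\, .
\]

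Next I would apply the identity $v_{r+m}(g\cdot \xi) = g\cdot v_r(\xi)$ from~\ref{ssec:Kbullet}, with $r=0$ and $m = d-n-1$, which yields $v_{d-n-1}(g\cdot [\xi]) = g\cdot v([\xi])$ in $\sT(C)_{-n-1}$. By the very definition of $\langle\,,\,\rangle$, this is equal to $\bigl\langle v([\xi]),\, g\cdot \sansu\bigr\rangle = \bigl\langle v([\xi]),\, w(\omega)\bigr\rangle$, which is exactly what the commutativity claim demands.

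The step requiring the most care is the verification that $v_r$ intertwines multiplication by homogeneous elements of~$\sS(C)$ with the graded multiplication on~$\sT(C)$ (the identity quoted from~\ref{ssec:Kbullet} is stated there without detailed proof). The cleanest way I see to justify it is to chase a cocycle $\alpha = (\alpha_{n-1},\ldots,\alpha_0)$ representing a class in $H^1(\mP,K_\bullet(r))$ through the double-complex setup of~\ref{ssec:KCocyc}: multiplication by $g \in \sS(\mP)_m$ defines a morphism of Koszul resolutions $K_\bullet(r) \to K_\bullet(r+m)$, and its effect on the top component~$\alpha_{n-1} \in \cO_\mP(r-d)(U_{01\cdots n})$ is visibly multiplication by~$g$. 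Once this is in hand, together with the (entirely formal) naturality of the cup product with respect to~$\eta$, the diagram commutes at the level of representing \v{C}ech cochains.
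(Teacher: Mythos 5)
Your proposal is correct and follows essentially the same route as the paper's proof: use $w(\omega)=g\cdot\sansu$ to identify $H^0(\eta)(\omega)=g$, appeal to naturality of the cup product to get $H^1(\eta)(\alpha\cup\omega)=g\cdot\alpha$, and then apply the multiplicativity $v_{d-n-1}(g\cdot\alpha)=g\cdot v(\alpha)$ from~\ref{ssec:Kbullet}. The extra paragraph you add, sketching why multiplication by $g\in\sS(\mP)_m$ gives a morphism of Koszul resolutions and hence why the $v_r$ intertwine the $\sS(C)$-module structures, is a reasonable fleshing-out of a compatibility the paper states without proof, but does not change the argument.
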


\begin{proof}
Let $\alpha \in H^1(C,\cO_C)$ and $\beta \in H^0(C,\Omega^1_C)$. Let $g = H^0(\eta)\bigl(\beta\bigr) \in \sS(C)_{d-n-1}$, so that $w(\beta) = g\cdot \sansu$. Then $H^1(\eta)\bigl(\alpha \cup \beta\bigr) = \alpha \cup H^0(\eta)\bigl(\beta\bigr) = g \cdot \alpha$ in $H^1\bigl(C,\cO_C(d-n-1)\bigr)$. Hence
\[
v_{d-n-1} \circ H^1(\eta)\bigl(\alpha \cup \beta\bigr) = v_{d-n-1}\bigl(g\cdot \alpha\bigr) = g\cdot v(\alpha) = \bigl\langle v(\alpha),w(\beta) \bigr\rangle\, .
\tag*{\qedhere}\]
\end{proof}

In particular, it follows that $\langle\ ,\ \rangle \colon Q \times Q^\prime \to \sT(C)_{-n-1} \cong k$ is a perfect pairing. Let $\theta \colon Q^\prime \isomarrow Q^\vee$ be the associated isomorphism. Note that the pairing $\langle\ ,\ \rangle$, and hence also the isomorphism~$\theta$, depends on the chosen isomorphism~$\eta$, through the dependence of~$\sansu$ on this choice. By Proposition~\ref{prop:EquivHW} the isomorphism class of the HW-triple that we obtain is independent of how we choose~$\eta$.

We can summarize the results obtained in this section as follows.

\begin{theorem}\label{thm:HWCI}
Let $C \subset \mP^n$ be a complete intersection curve of degrees $(d_1,\ldots,d_{n-1})$ over an algebraically closed field~$k$ of characteristic~$p$ with $p \nmid \prod_{i=1}^{n-1}\; d_i$. With notation as in~\emph{\ref{ssec:CINotat}}, define $Q = \sT(C)_{-d}$. Define $Q^\prime$ as in \eqref{eq:QprimeDef}, and let $\theta \colon Q^\prime \isomarrow Q^\vee$ be the isomorphism defined above. Define a $\sigma$-linear map $\Phi \colon Q \to Q$ by
\[
\Phi[A] = \bigl[(f_1\cdots f_{n-1})^{p-1} \cdot A^p\bigr]\, ,
\]
and define $\Psi \colon \Ker(\Phi) \to Q^\vee$ by
\[
\Psi[A] = \theta\Bigl[\frac{(f_1\cdots f_{n-1})^{p-1}}{f_1} \cdot A^p\, ,\quad \frac{(f_1\cdots f_{n-1})^{p-1}}{f_2} \cdot  A^p\, ,\quad \ldots \quad ,\quad \frac{(f_1\cdots f_{n-1})^{p-1}}{f_{n-1}} \cdot  A^p  \Bigr]\, .
\]
Then $(Q,\Phi,\Psi)$ is a Hasse--Witt triple whose associated Dieudonn\'e module is isomorphic to the Dieudonn\'e module of~$J[p]$, where $J$ is the Jacobian of~$C$.
\end{theorem}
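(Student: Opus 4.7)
The proof assembles the pieces developed earlier in the section. Let $\cQ_C = (H^1(C,\cO_C),\Phi_C,\Psi_C)$ be the HW-triple of~\ref{ssec:DMofJ[p]}, obtained from the polarized Dieudonn\'e module $(H^1_\dR(C/k),F,V,b)$ of~$J[p]$ via the construction of~\ref{ssec:DMtoHW}. The plan is to show that the isomorphism $v\colon H^1(C,\cO_C) \isomarrow Q$ of~\ref{ssec:Kbullet}, together with possibly a nonzero scalar factor on~$\Psi$, transports $\cQ_C$ to $(Q,\Phi,\Psi)$; the desired conclusions will then follow from Theorem~\ref{thm:DM=HW} together with Proposition~\ref{prop:EquivHW}.

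Proposition~\ref{prop:PhiFormula} already establishes that $v$ intertwines $\Phi_C$ with~$\Phi$. For the second operator, recall that $\Psi_C$ admits two equivalent descriptions: as a $\sigma$-linear map $\Ker(\Phi_C) \to H^0(C,\Omega^1_C)$ sending $x\mapsto F(x)$, or as a map $\Ker(\Phi_C) \to H^1(C,\cO_C)^\vee$ given by $x\mapsto b(-,F(x))$. The passage between these two views is the perfect pairing on $H^1(C,\cO_C) \times H^0(C,\Omega^1_C)$ obtained by restricting the symplectic form~$b$ on $H^1_\dR(C/k)$, which coincides up to a nonzero scalar with the Serre duality (cup product) pairing, via the standard identification of the principal polarization of~$J$ with Serre duality. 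Proposition~\ref{prop:PsiFormula} describes~$\Psi_C$ in the first guise: under~$v$ and the isomorphism $w\colon H^0(C,\Omega^1_C) \isomarrow Q^\prime$ of~\ref{ssec:H0Omega1}, it becomes the explicit tuple appearing in~\eqref{eq:CIPsi}. The proposition of~\ref{ssec:theta} shows in turn that $\theta\colon Q^\prime \isomarrow Q^\vee$ is precisely the transport of the cup product pairing through~$(v,w)$. Composing the three identifications, $v$ intertwines~$\Psi_C$ with $d\cdot \Psi$ for some constant $d\in k^*$ that absorbs the scaling ambiguities introduced by the choice of $\eta$ in~\ref{ssec:theta} and by the comparison between $b$ and Serre duality.

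It follows immediately that $(Q,\Phi,\Psi)$ is a HW-triple: both the condition $\Image(\Psi) \subset \Image(\Phi)^\perp$ and the bijectivity of~$\Psi$ onto this subspace hold for~$\Psi_C$, and both are preserved under composition with the isomorphism~$v$ and multiplication by a nonzero scalar. Moreover, $v$ is then an equivalence of HW-triples in the sense of~\ref{ssec:equiv}, with constants $(c,d) = (1,d)$. Since $k$ is algebraically closed, Proposition~\ref{prop:EquivHW} upgrades this equivalence to a genuine isomorphism $\cQ_C \cong (Q,\Phi,\Psi)$ of HW-triples, and Theorem~\ref{thm:DM=HW} then yields that the associated polarized Dieudonn\'e module is isomorphic to that of~$J[p]$.

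The real work of the proof already lies in the preceding Propositions~\ref{prop:PhiFormula} and~\ref{prop:PsiFormula}, together with the pairing computation of~\ref{ssec:theta}. In the synthesis above the only delicate point is the comparison of the polarization~$b$ with the Serre duality pairing and the tracking of the normalizations hidden in the choice of~$\eta$; this is precisely the discrepancy that the looser notion of equivalence and Proposition~\ref{prop:EquivHW} are designed to neutralize, which is why no explicit bookkeeping of signs or constants is required.
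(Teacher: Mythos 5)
Your proposal is correct and matches the paper's approach: the paper gives no separate proof for Theorem~\ref{thm:HWCI} but introduces it with ``We can summarize the results obtained in this section as follows,'' and your argument is precisely that synthesis of Propositions~\ref{prop:PhiFormula} and~\ref{prop:PsiFormula}, the pairing computation in~\ref{ssec:theta}, and Proposition~\ref{prop:EquivHW}. You also correctly identify the one point the paper leaves tacit, namely that the restriction of the polarization form~$b$ to $H^1(C,\cO_C)\times H^0(C,\Omega^1_C)$ agrees with Serre duality only up to a nonzero scalar, and that the equivalence relation of~\ref{ssec:equiv} together with Proposition~\ref{prop:EquivHW} is exactly what makes this discrepancy harmless.
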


\begin{remark}
If $C$ is a plane curve ($n=2$), the definition of $Q$ and~$Q^\prime$ simplifies; see the theorem in the introduction, in which we express everything using only $\sS = \sS(\mP)$ and $\sT = \sT(\mP)$. Note that in the definition of~$Q^\prime$ as given there, the relations $\frac{\partial f}{\partial X_j} \cdot \xi = 0$ imply that $f \cdot \xi = 0$, by the Euler relation and our assumption that $p\nmid d$. A basis of~$Q$ is in this case given by the classes $X^{-m} \cdot X^{-\mathbf{1}}$ where $m=(m_0,m_1,m_2)$ runs over the elements of~$\mN^3$ with $|m|=d-3$. If $\sansu$ is a non-zero element of~$\sU$ then the dual basis of $Q^\prime = Q^\vee$ (identified via the isomorphism~$\theta$) is given by the elements $X^m \cdot \sansu$.
\end{remark}

\begin{example}\label{exa:C/F5}
Consider the plane curve $C$ over~$\mF_5$ given by $f=0$, where
\[
f = X_0^4 + X_1^4 + X_2^4 + X_0^3 X_1 + X_0 X_1^2 X_2 - X_1^2 X_2^2 + 3\, X_1 X_2^3 \, .
\]
(We omit the verification that $C$ is smooth over~$\mF_5$.) A basis for the space~$Q$ is given by the classes
\[
e_0 = [X_0^{-2} X_1^{-1} X_2^{-1}]\, ,\quad e_1 = [X_0^{-1} X_1^{-2} X_2^{-1}]\, ,\quad e_2 = [X_0^{-1} X_1^{-1} X_2^{-2}]\, .
\]
The Hasse--Witt matrix with respect to this basis is
\[
\begin{pmatrix}
0 & 4 & 1\\
0 & 2 & 3\\
0 & 2 & 3
\end{pmatrix}
\]
(Elements of~$Q$ are represented as column vectors with respect to the chosen basis, and the matrix acts from the left.) In this case, knowing the Hasse--Witt matrix is not sufficient to determine the Ekedahl--Oort type: both
\begin{equation}\label{eq:KraftCycles}
\begin{tikzpicture}
[baseline,auto,knoop/.style={circle,inner sep=0pt,minimum size=8pt,fill,text=white}]
 \node[knoop] (o) at (330:1) {$\scriptstyle \mathbf{2}$};
 \node[knoop] (n) at (0,1) {$\scriptstyle \mathbf{4}$};
 \node[knoop] (w) at (210:1) {$\scriptstyle \mathbf{1}$};
 \path [->,thick,red] (n) edge [bend left=45] node[black] {$F$} (o);
 \path [->,thick,green] (n) edge [bend right=45] node[swap,black] {$V$} (w);
 \path [->,thick,red] (o) edge [bend left=45] node[black] {$F$} (w);
 \begin{scope}[xshift=3cm]
   \node[knoop] (o2) at (330:1)  {$\scriptstyle \mathbf{3}$};
   \node[knoop] (n2) at (0,1) {$\scriptstyle \mathbf{6}$};
   \node[knoop] (w2) at (210:1) {$\scriptstyle \mathbf{5}$};
   \path [->,thick,red] (n2) edge [bend left=45] node[black] {$F$} (o2);
   \path [->,thick,green] (n2) edge [bend right=45] node[swap,black] {$V$} (w2);
   \path [->,thick,green] (w2) edge [bend right=45] node[swap,black] {$V$} (o2);
 \end{scope}
 \node at (1.5,-2) {$s_3s_2 = \left[\begin{smallmatrix} 1 & 2 & 3 & 4 & 5 & 6\\ 1 & 4 & 2 & 5 & 3 & 6\end{smallmatrix}\right]$};
\end{tikzpicture}
\qquad\text{and}\qquad
\begin{tikzpicture}
[baseline,auto,knoop/.style={circle,inner sep=0pt,minimum size=8pt,fill}]
 \node[knoop] (o) at (1,0)  {};
 \node[knoop] (n) at (0,1) {};
 \node[knoop] (w) at (-1,0) {};
 \node[knoop] (z) at (0,-1) {};
 \path [->,thick,red] (n.east) edge [bend left=45] node[black] {$F$} (o.north);
 \path [->,thick,green] (n.west) edge [bend right=45] node[swap,black] {$V$} (w.north);
 \path [->,thick,red] (o.south) edge [bend left=45] node[black] {$F$} (z.east);
 \path [->,thick,green] (w.south) edge [bend right=45] node[swap,black] {$V$} (z.west);

 \node[knoop] (p) at (3,.7) {};
 \node[knoop] (q) at (3,-.7) {};
 \path [->,thick,red] (p) edge [bend left=45] node[black] {$F$} (q);
 \path [->,thick,green] (p) edge [bend right=45] node[swap,black] {$V$} (q);
 \node at (1.25,-2) {$s_3 = \left[\begin{smallmatrix} 1 & 2 & 3 & 4 & 5 & 6\\ 1 & 2 & 4 & 3 & 5 & 6\end{smallmatrix}\right]$};
\end{tikzpicture}
\end{equation}
are still possible. (The pictures represent Dieudonn\'e modules given by their Kraft cycles; see~\cite{Kraft} or \cite{GSAS}, Section~2, or see also the proof of Proposition~\ref{prop:EquivHW}. The numbering of the nodes in the left picture can be ignored for now. The permutations given below the pictures are the minimal representatives of the corresponding elements of $W_X\backslash W$ as in Theorem~\ref{thm:ClassifBT1}. In the moduli space~$\cA_3$, the left picture corresponds to a $2$-dimensional EO-stratum~$\cA_3(s_3s_2)$, the right one to a $1$-dimensional stratum~$\cA_3(s_3)$ which is contained in the boundary of~$\cA_3(s_3s_2)$.)

With the help of a computer (to solve a system of linear equations over~$\mF_5$) we find that $(X_0X_1X_2)^{-7}$ times
\[
\begin{split}
3 X_0^6 X_1^6 + 4 X_0^6 X_1^5 X_2 + 4 X_0^6 X_1^4 X_2^2 + X_0^6 X_1^3 X_2^3 + 2 X_0^6 X_1^2 X_2^4 + 2 X_0^6 X_1 X_2^5 + 3 X_0^6 X_2^6 + 4  X_0^5 X_1^6 X_2 \\
 + 4 X_0^5 X_1^2 X_2^5 + 2 X_0^5 X_1 X_2^6 + 2 X_0^4 X_1^6 X_2^2 + 2 X_0^4 X_1^5 X_2^3 + 4 X_0^4 X_1^4 X_2^4 + 3 X_0^4 X_1^2 X_2^6 + 3 X_0^3 X_1^5 X_2^4 \\
+ 2 X_0^3 X_1^4 X_2^5 + X_0^3 X_1^3 X_2^6 + 4 X_0^2 X_1^6 X_2^4 + X_0^2 X_1^5 X_2^5 + 2 X_0^2 X_1^4 X_2^6 + 2 X_0 X_1^6 X_2^5 + X_0 X_1^5 X_2^6
\end{split}
\]
is a generator $\sansu$ of~$\sU$. (At this stage we find it convenient to ``shift'' everything by a power of $X_0X_1X_2$ so as to obtain polynomial expressions.) The elements
\[
\check{e}_0 = X_0 \cdot \sansu\, ,\quad \check{e}_1 = X_1 \cdot \sansu\, ,\quad \check{e}_2 = X_2 \cdot \sansu
\]
then form a basis of~$Q^\prime$ which is dual (under the isomorphism~$\theta$) to the chosen basis of~$Q$.

The kernel of~$\Phi$ is spanned by $e_0$ and $e_1 + e_2$. We find
\[
\Psi(e_0) = \bigl[f^3 \cdot X_0^{-10} X_1^{-5} X_2^{-5}\bigr] = 3\check{e}_0 + \check{e}_1 + 3\check{e}_2
\]
and
\[
\Psi(e_1+e_2) = \bigl[f^3 \cdot (X_0^{-5} X_1^{-10} X_2^{-5} + X_0^{-5} X_1^{-5} X_2^{-10})\bigr] = 3\check{e}_0 + 3\check{e}_1 + \check{e}_2
\]

In the construction described in Section~\ref{ssec:HWtoDM}, choose $R_0 = k\cdot e_2$ as a complement of $\Ker(\Phi)$, and use $\{e_0,e_1,e_2,\check{e}_2,\check{e}_1,\check{e}_0\}$ as a symplectic basis of $M = Q \oplus Q^\vee$. (Note the order! The standard symplectic form is the one given by the anti-diagonal matrix with coefficients $(-1,-1,-1,1,1,1)$.) With respect to this basis, the matrices of $F$ and~$V$ on~$M$ are given by
\[
F = \begin{pmatrix}
0 & -1 & 1 & 0 & 0 & 0\\
0 & -3 & 3 & 0 & 0 & 0\\
0 & -3 & 3 & 0 & 0 & 0\\
3 & 1 & 0 & 0 & 0 & 0\\
1 & 3 & 0 & 0 & 0 & 0\\
3 & 3 & 0 & 0 & 0 & 0
\end{pmatrix}
\qquad\text{and}\qquad
V = \begin{pmatrix}
0 & 0 & 0 & 0 & 0 & 0\\
0 & 0 & 0 & 0 & 0 & 0\\
0 & 0 & 0 & 0 & 0 & 0\\
0 & 0 & 0 & 3 & 3 & 1\\
-3 & -3 & -1 & -3 & -3 & -1\\
-3 & -1 & -3 & 0 & 0 & 0\\
\end{pmatrix}
\]
(If we write the matrix of~$F$ in block form as $\left(\begin{smallmatrix} A & B \\ C & D\end{smallmatrix}\right)$ and if for a matrix~$M$ we denote by~$M^\dagger$ the matrix obtained by reflection in the anti-diagonal then the matrix of~$V$ is $\left(\begin{smallmatrix} D^\dagger & -B^\dagger \\ -C^\dagger & A^\dagger\end{smallmatrix}\right)$. This means that \eqref{eq:bFxy} holds.) Using the procedure outlined in Section~\ref{ssec:classif} we now find that the $p$-kernel group scheme $J[p]$ is of type $s_3 s_2$ (the left picture in~\eqref{eq:KraftCycles}). To be completely explicit: in $k=\kbar \supset \mF_5$ choose an element~$\alpha$ with $\alpha^{124} = 2$; then the vectors
\[{\scriptstyle
f_1 = \alpha^{-25} \cdot \begin{pmatrix} 0\\ 0\\ 0\\ 2\\ 0\\ -1\end{pmatrix}\, ,\quad
f_2 = \alpha^{-5} \cdot \begin{pmatrix} 2\\ 1\\ 1\\ 3\\ -1\\ -1\end{pmatrix}\, ,\quad
f_3 = \alpha \cdot \begin{pmatrix} 0\\ 0\\ 0\\ 3\\ 2\\ 0 \end{pmatrix}\, ,\quad
f_4 = \alpha^{-1} \cdot \begin{pmatrix} 0\\ 3\\ 0\\ 2\\ 0\\ 0\end{pmatrix}\, ,\quad
f_5 = \alpha^5 \cdot \begin{pmatrix} 0\\ 0\\ 0\\ 0\\ 2\\ 2\end{pmatrix}\, ,\quad
f_6 = \alpha^{25} \cdot \begin{pmatrix} 2\\ 3\\ 3\\ 0\\ 0\\ 0\end{pmatrix}
}\]
form a basis of~$M$ on which $F$ and~$V$ act as in the left diagram in~\eqref{eq:KraftCycles}; i.e., $F(f_4) = f_2$, etc.
\end{example}

\section{A brief description of the Magma implementation}
\label{sec:Magma}

\subsection{}
Jointly with Wieb Bosma (who wrote most of the code), we have implemented our method to calculate the Ekedahl--Oort type of a plane curve in Magma. One of the referees has kindly provided a cleaned-up version of the code, and has given permission to make it available on the author's webpage. On their request we outline how the implementation works.

To use the code, run \texttt{Attach("EOType.txt")}. If a polynomial~$f$ in three variables over a field of characteristic~$p$ has been created, calling \texttt{EOType(f)} then returns the Ekedahl--Oort type of the plane curve defined by~$f$. (It is assumed that $f$ is homogeneous of degree $d\geq 3$ with $p\nmid d$, and that the curve defined by it is smooth over the base field; if not, a message is returned.)

\subsection{Overview.} After checking if the above conditions on $f$ are satisfied, \texttt{EOType} successively calls three functions \texttt{HWtriple}, \texttt{DieudMod}, and \texttt{WeylGrElt}. The function \texttt{HWtriple} implements the theorem stated in the introduction. Next \texttt{DieudMod} converts the Hasse--Witt triple into a Dieudonn\'e module, following the method explained in Section~\ref{sec:DM}. Finally, \texttt{WeylGrElt} computes the Weyl group coset that represents the isomorphism class of the Dieudonn\'e module, under the correspondence given in Theorem~\ref{thm:ClassifBT1}. More details about these functions are given below.

Two general comments: (a) We represent $\sigma$-linear maps by ordinary matrices, choosing bases for the spaces involved. E.g., in the code \texttt{APhi} and~\texttt{APsi} are matrices that represent the maps $\Phi$ and~$\Psi$ that are part of a Hasse--Witt triple. (b) Elements of the space~$\sT$ (notation as in the theorem stated in the introduction) can be represented by Laurent polynomials, but this turns out to be inconvenient because Magma functions such as \texttt{MonomialCoefficient} are only available for polynomials. The solution we use is to multiply Laurent polynomials by a sufficiently high power of $(X_0X_1X_2)$ to ensure that we obtain ordinary polynomials.

\subsection{The function \texttt{HWtriple}.}\label{sec:HWtriple}
This function takes as inputs a field $k$, an integer~$d$ and a polynomial $f$. It outputs a string $s$, a matrix~$A(\Phi)$ of size $g \times g$, where $g = (d-1)(d-2)/2$, a basis $\kappa = \{\kappa_1,\ldots,\kappa_h\}$ of the kernel of the $k$-linear map $k^g \to k^g$ given by~$A(\Phi)$, and a matrix $A(\Psi)$ of size $g \times h$. (The integer $h$ is not known a priori). The purpose of the string~$s$ is only to avoid unnecessary calculations: it will be assigned one of the values \texttt{ordinary}, \texttt{superspecial} or \texttt{interesting}. In the first two cases (which are detected as soon as we have the Hasse--Witt matrix $A(\Phi)$), no further work is required and we can directly output the Ekedahl--Oort type.

A basis for the space~$Q$ is given by the classes of the monomials $m_i^{-1} \cdot (X_0X_1X_2)^{-1}$, where $m_1,\ldots,m_g$ are all monomials in $k[X_0,X_1,X_2]$ of degree $d-3$. These monomials are stored in a sequence called~\texttt{Md}. Next the Hasse--Witt matrix $A(\Phi)$ with respect to this basis is calculated. First we store $F = f^{p-2}$. (For $A(\Phi)$ we need $f^{p-1}$; but we again need $f^{p-2}$ later.) The matrix coefficient $A(\Phi)_{ij}$ is the coefficient of $m_j^p \cdot (X_0X_1X_2)^{(p-1)}$ in $f^{p-1} \cdot m_i = f\cdot F \cdot m_i$. If $A(\Phi)$ is either invertible (ordinary case) or zero (superspecial case), we can stop.

If the curve is neither ordinary nor superspecial, we go on to store a basis $\kappa = \{\kappa_1,\ldots,\kappa_h\}$ of the kernel of the linear map $A(\Phi) \colon k^g \to k^g$. Note that $k^g$ represents the space~$Q$ through the chosen basis of~$Q$, and that a basis of the kernel of the $\sigma$-linear map $\Phi \colon Q \to Q$ is given by the vectors ${}^\tau \kappa_j$, where $\tau$ is the inverse of~$\sigma$. Next we store bases for the spaces $\sT_{-2d+2}$ and $\sT_{-3d+3}$. As explained above, we want to work with ordinary polynomials instead of Laurent polynomials; for this reason, the elements that we use are in fact $(X_0X_1X_2)^{3d-3}$ times a basis. Then we calculate the partial derivatives $\partial f/\partial X_i$ and we compute the matrix \texttt{Multdf} which represents the linear map $\sT_{-3d+3} \to \sT_{-2d+2}^{\oplus 3}$ given by $\xi \mapsto \bigl(\frac{\partial f}{\partial X_0} \cdot \xi,\; \frac{\partial f}{\partial X_1}\cdot \xi,\; \frac{\partial f}{\partial X_2}\cdot \xi\bigr)$. By definition, $\sU$ is the kernel of this map. We choose a generator; but for the same reason as above, what we store is not a generator~$\sansu$ of~$\sU$ but rather $\tilde{\sansu} = (X_0X_1X_2)^{3d-3} \cdot \sansu$, which in the code is called~\texttt{utilde}. The elements $m_i \cdot \sansu$ form a basis of the space $Q^\prime \cong Q^\vee$ which is dual to the chosen basis $\{m_i^{-1} \cdot (X_0X_1X_2)^{-1}\}$ of~$Q$.

The final step of \texttt{HWtriple} is the calculation of the $g\times h$ matrix~$A(\Psi)$. If we write $\kappa_j$ as a column matrix with entries $\kappa^{(j)}_i$ ($i=1,\ldots,g$), the $j$th column of the matrix~$A(\Psi)$ is obtained by solving
\begin{equation}\label{eq:PsiEq}
A(\Psi)_{1j} \cdot m_1 \cdot \sansu + \cdots + A(\Psi)_{gj} \cdot m_g\cdot \sansu = f^{p-2} \cdot \Bigl(\kappa_1^{(j)} \cdot m_1^{-p} \cdot X^{-p\cdot \mathbf{1}} + \cdots + \kappa_g^{(j)} \cdot m_g^{-p} \cdot X^{-p\cdot \mathbf{1}}\Bigr)\, .
\end{equation}
(This is an equation in $\sT_{-2d}$, and $X^{-p\cdot \mathbf{1}}$ means $(X_0X_1X_2)^{-p}$. Note that the $j$th column of the matrix~$A(\Psi)$ is the vector $\Psi({}^\tau\kappa_j)$; as $\Psi \colon \Ker(\Phi) \to Q^\prime$ is given by $[A] \mapsto[f^{p-2} \cdot A^p]$, this leads to equation~\eqref{eq:PsiEq} for the coefficients of $A(\Psi)$.)

Let $c = (2d-1)(2d-2)/2$, which is the number of monomials in $k[X_0,X_1,X_2]$ of degree $2d-3$, and let $M_1,\ldots,M_c$ be those monomials. Because Magma's default is to let matrices act from the right, \eqref{eq:PsiEq} is written as the matrix equation
\begin{equation}\label{eq:APsiEq}
{}^\mathsf{t} A(\Psi) \cdot B = \kappa \cdot C\, ,
\end{equation}
where $B$ and~$C$ are the matrices of size $g \times c$ whose rows express the $m_i \cdot \sansu$ (resp.\ the $f^{p-2} \cdot m_i^{-p} \cdot (X_0X_1X_2)^{-p}$) as vectors with respect to the basis $\{M_j^{-1} \cdot (X_0X_1X_2)^{-1}\}_{j=1,\ldots,c}$ of $\sT_{-2d}$, and where $\kappa$ now is the matrix of size $h\times g$ whose rows give the vectors~$\kappa_j$. Concretely, $B_{ji}$ is the coefficient of $(X_0X_1X_2)^{3d-4}$ in $M_i \cdot m_j \cdot \tilde{\sansu}$, and $C_{ji}$ is the coefficient of $(X_0X_1X_2)^{p-1}\cdot m_j^p$ in $f^{p-2}\cdot M_i$. (Recall that $F = f^{p-2}$ has been calculated before and that we have stored $\tilde{\sansu} = (X_0X_1X_2)^{3d-3} \cdot \sansu$.) Then \eqref{eq:APsiEq} is solved using Magma's function \texttt{IsConsistent}.

\subsection{The function \texttt{DieudMod}.}
\label{sec:DieudMod}
This function takes as input a field $k$, a positive integer $d$, a matrix~$A(\Phi)$ of size $g \times g$, where $g = (d-1)(d-2)/2$, a basis $\{\kappa_1,\ldots,\kappa_h\}$ of the kernel of the $k$-linear map $k^g \to k^g$ given by~$A(\Phi)$, and a matrix $A(\Psi)$ of size $g \times h$. It outputs a matrix~$A(F)$ of size $2g \times g$ whose columns are linearly independent.

We first find a subset $I = \{i_1,\ldots,i_{g-h}\} \subset \{1,\ldots,g\}$ such that $\mathrm{Span}(e_i;i \in I)$ is a complement of $\{\kappa_1,\ldots,\kappa_h\}$ inside~$k^g$. To obtain the $j$th column of the matrix~$A(F)$, write the standard base vector~$e_j$ in the form
\begin{equation}\label{eq:ejDec}
e_j = \sum_{\mu=1}^{g-h}\; a_\mu \cdot e_{i_\mu} + \sum_{\nu=1}^h\; b_\nu \cdot \kappa_\nu\, .
\end{equation}
Then the output is the matrix~$A(F)$ given by
\[
A(F)_{rj} = \begin{cases}
\sum_{\mu=1}^{g-h}\; a_\mu \cdot A(\Phi)_{r,i_\mu} & r=1,\ldots,g \\
\sum_{\nu=1}^h\; b_\nu \cdot A(\Psi)_{2g+1-r,\nu} & r=g+1,\ldots,2g.
\end{cases}
\]

To explain why this is what we want, recall that the goal is to give the matrix of $F \colon M \to M$, where $M = Q \oplus Q^\vee$. (See Section~\ref{ssec:HWtoDM}.) As $F$ factors through the projection $M \to Q$, we only need to give the first $g$ columns. We are identifying $Q$ with~$k^g$ via the basis $\{m_i^{-1} \cdot X^{-\mathbf{1}}\}_{i=1,\ldots,g}$. The dual vector space~$Q^\vee$ is identified with~$Q^\prime$ via the choice of a generator $\sansu \in \sU$, and the dual basis of~$Q^\prime$ is $\{m_i\cdot \sansu\}_{i=1,\ldots,g}$. However, as a preparation for the next step we want to use $e_1,\ldots,e_g,\check{e}_g,\ldots,\check{e}_1$ (note the order!) as a basis for $M = Q \oplus Q^\vee = (k^g) \oplus (k^g)^\vee$, where $e_1,\ldots,e_g$ is the standard basis of~$k^g$.

By our choice of~$I$, the space $R_0 = \mathrm{Span}(e_i)_{i\in I}$ is a complement of ${}^\sigma R_1 := \mathrm{Span}(\kappa_1,\ldots,\kappa_h)$ inside~$k^g$. Then $R_0$ is also a complement of $R_1 = \Ker(\Psi) = \mathrm{Span}({}^\tau\kappa_1,\ldots,{}^\tau\kappa_h)$, where we recall that $\tau = \sigma^{-1}$. With notation as in~\eqref{eq:ejDec}, the decomposition of $e_j$ corresponding to $k^g = R_0 \oplus R_1$ is given by $e_j = \sum_{\mu=1}^{g-h}\; {}^\tau a_\mu \cdot e_{i_\mu} + \sum_{\nu=1}^h\; {}^\tau b_\nu \cdot {}^\tau\kappa_\nu$. This gives the stated formula for the coefficients of the matrix~$A(F)$, where the lower half of each column is put upside down because we order the base vectors of~$(k^g)^\vee$ as $\check{e}_g,\ldots,\check{e}_1$.

\subsection{The function \texttt{WeylGrElt}.}
\label{sec:WeylGrElt}
In this step we calculate (the minimal representative of) the Weyl group coset that, under the bijection in Theorem~\ref{thm:ClassifBT1}, corresponds to our Dieudonn\'e module. As outlined in Section~\ref{ssec:classif}, we have to determine the canonical flag for this. Rather than using the $\sigma^{-1}$-linear Verschiebung, we build this flag using the operations $F$ and~$\perp$ (taking orthogonal complement), as this is more convenient. (The result is the same.) For the conversion to a Weyl group element, we then follow \cite{GSAS}, Section~3.6. In the version of the code provided by the referee, most of the original function \texttt{WeylGrElt} has been moved to a new function called \texttt{EOseq}, and in addition to the Weyl group representative also the corresponding so-called ``final type'' is given.

The function \texttt{WeylGrElt} takes as input a field~$k$, a positive integer $d$, and a matrix $A(F)$ of size $2g \times g$ whose columns are linearly independent. It outputs an element $w \in \gS_{2g}$ (the symmetric group on $2g$ letters), as well as a sequence of integers of length $2g+1$ (the final type). There are several steps that are carried out. In steps (1)--(3) we (partially) fill a table, whose initial state is the following:
\[
\begin{array}{| r || c | c | c | c | c | c | c | c | c |}
\hline
i & 0 & 1 & 2 & \cdots & g-1 & g & g+1 & \cdots & 2g \\
\hline
\mathrm{Basis}(i) & \emptyset & & & & & \text{the $g$ columns of $A(F)$} && &  \{e_1,\ldots,e_{2g}\}\\
\hline
f(i) & 0 & & & & &&& & g \\
\hline
\end{array}
\]
where $\{e_1,\ldots,e_{2g}\}$ denotes the standard basis of~$k^{2g}$. If $\mathrm{Basis}(i)$ is defined, it consists of a set of $i$ linearly independent vectors in~$k^{2g}$, and if $f(i)$ is defined, it is an integer with $0 \leq f(i) \leq i$.  The calculation involves finding the perpendiculars of certain subspaces $W \subset k^{2g}$ with respect to the symplectic form on~$k^{2g}$ that is represented by the block matrix $\bigl({0\atop -\mathsf{J}}{\mathsf{J}\atop 0}\bigr)$, where $\mathsf{J}$ denotes the anti-diagonal matrix of size $g \times g$ with all anti-diagonal entries equal to~$1$. In step~(4) we will then define $f(i)$ for all~$i$, and in step~(5) we convert the sequence~$f$ into a permutation.

In more detail, here is what happens.
\begin{enumerate}
\item Create a table as above.
\item Search for the first index~$i$ such that $\mathrm{Basis}(i)$ is defined, but $f(i)$ is not yet defined. If there is no such~$i$ (in the range $1,\ldots, 2g$), go to step~(4). If $\mathrm{Basis}(i) = \{b_1,\ldots,b_i\}$, calculate the vectors $A(F)\bigl({}^\sigma b_j\bigr)$ ($j=1,\ldots,i$), and let $f(i)$ be the dimension of their $k$-linear span.
\item If $\mathrm{Basis}\bigl(f(i)\bigr)$ is already defined, again do step~(2). If not, do the following: \begin{itemize}
\item Among the vectors $A(F)\bigl({}^\sigma b_1\bigr),\ldots,A(F)\bigl({}^\sigma b_i\bigr)$, find a maximal linearly independent subset, say $\{\beta_1,\ldots,\beta_{f(i)}\}$, and store this collection as $\mathrm{Basis}\bigl(f(i)\bigr)$.
\item Find a basis for the space $\mathrm{Span}\bigl(\beta_1,\ldots,\beta_{f(i)}\bigr)^\perp$ and store this as $\mathrm{Basis}\bigl(2g-f(i)\bigr)$.
\end{itemize}
After this, return to step~(2).
\item If $f(i)$ is defined for all~$i$, go to step~(5). Otherwise, find the first value~$a$ for which $f(a)$ is still undefined, and let $b$ be the next value for which $f(b)$ is defined. It will be true that either $f(a-1) = f(b)$ or that $f(b) = f(a-1) + (b-a+1)$; in the first case, set $f(a), f(a+1), \ldots, f(b-1)$ all equal to $f(a-1)$, in the second case define $f(i)$ for $a\leq i< b$ by the rule $f(i) = f(a-1) + (i-a+1)$. Now repeat this step.
\item Let $j_1 < j_2 < \cdots < j_g$ be the values in $\{1,2,\ldots,2g\}$ with the property that $f(j) = f(j-1)$. (There will be precisely~$g$ such values.) Let $i_1 < i_2 < \cdots < i_g$ be the remaining values. Define a function $w \colon \{1,2,\ldots,2g\} \to \{1,2,\ldots,2g\}$ by $w(j_m) = m$ and $w(i_m) = g+m$. Finally, return the Weyl group element
\[
\left[\begin{matrix}
1 & 2 & \cdots & g & g+1& \cdots & 2g\\
w(1) & w(2) & \cdots & w(g) & w(g+1) & \cdots & w(2g)
\end{matrix} \right]
\]
(given as a product of cycles), and the final type, which is the sequence of numbers~$f(i)$.
\end{enumerate}
\medskip

\noindent
It should be noted that in the actual Magma implementation, the index~$i$ in our table runs from $1$ to $2g+1$, rather than from $0$ to~$2g$; so everything is shifted by~$1$.

%%
%% make bibliography small:
%%
{\small

\bigskip

} %% end of small for the blibliography

\noindent
\texttt{b.moonen@science.ru.nl}

\noindent
Radboud University Nijmegen, IMAPP, PO Box 9010, 6500GL Nijmegen, The Netherlands


\begin{thebibliography}{99}
\setlength{\parskip}{0pt}
\setlength{\itemsep}{0pt plus 0.3ex}

\bibitem{DevaHalli}
S.~Devalapurkar, J.~Halliday,
The Dieudonné modules and Ekedahl--Oort types of Jacobians of hyperelliptic curves in odd characteristic.
Preprint, 2017. \url{https://arxiv.org/abs/1712.04921}

\bibitem{Kraft}
H.~Kraft,
Kommutative algebraische $p$-Gruppen (mit Anwendungen auf $p$-divisible Gruppen und abelsche Variet\"aten), manuscript, Univ.\ Bonn, Sept.\ 1975, 86 pp. (Unpublished)

\bibitem{GSAS}
B.~Moonen,
Group schemes with additional structures and Weyl group cosets. In: Moduli of abelian varieties (Texel Island, 1999), 255--298; Progr.\ Math., 195, Birkh\"auser, Basel, 2001.

\bibitem{CDI2}
B.~Moonen,
Computing discrete invariants of varieties in positive characteristic, II. $F$-Zips of projective hypersurfaces. In preparation.

\bibitem{OdaDR}
T.~Oda,
The first de Rham cohomology group and Dieudonn\'e modules.
Ann.\ Sci.\ \'Ecole Norm.\ Sup.\ (4) 2 (1969), 63--135.

\bibitem{OortE33}
F.~Oort,
A stratification of a moduli space of polarized abelian varieties in positive characteristic. In: Moduli of curves and abelian varieties, 47--64; Aspects Math., E33, Friedr.\ Vieweg, Braunschweig, 1999.

\bibitem{OortTexel}
F.~Oort,
A stratification of a moduli space of abelian varieties. In: Moduli of abelian varieties (Texel Island, 1999), 345--416; Progr.\ Math., 195, Birkh\"auser, Basel, 2001.

\bibitem{PriesUlmer}
R.~Pries, D.~Ulmer,
On BT$_1$ group schemes and Fermat curves.
New York J.\ Math.\ 27 (2021), 705--739.

\bibitem{stacks-project}
The Stacks Project Authors,
Stacks Project, \url{https://stacks.math.columbia.edu}.

\end{thebibliography}
\end{document}